\documentclass{article}

\usepackage[utf8]{inputenc}
\usepackage{amsmath,amsfonts,amssymb}
\usepackage{amsthm,upref}
\usepackage[a4paper]{geometry}
\usepackage{color} 
\usepackage[dvipsnames]{xcolor}
\usepackage{graphicx}
\usepackage{subfigure}
\usepackage{tabularx,float}
\usepackage[hidelinks]{hyperref}
\usepackage{comment} 
\usepackage{tabularx} 
\usepackage[pagewise]{lineno}
\usepackage{titlefoot} 
\usepackage{fancyhdr}


\geometry{verbose,tmargin=1.1in,bmargin=1.32in,lmargin=1.1in,rmargin=1.1in,footskip=0.8cm}

\newcommand{\note}[1]{ \textcolor{Maroon}{ \ [ \ #1 \  ] \ }} 
\newtheorem{thm}{Theorem}[section]

\newtheorem{proposition}[thm]{Proposition}

\newtheorem{lemma}[thm]{Lemma}
\newtheorem{remark}[thm]{Remark}
\newtheorem{assumption}{Assumption}

\newcommand{\eps}{\varepsilon}
\newcommand{\Diff}{\textup{D}}

\newcommand{\real}{\textrm{Re}\,}

\newcommand{\R}{\mathbb{R}}

\newcommand{\dd}{{\mathrm d}}
\newcommand{\transpose}{\textnormal{T}}
\newcommand{\e}{\mathrm{e}}

\usepackage{todonotes}
\newcommand\rev[1]{{\textcolor{black}{#1}}} 

\title{Rate and Bifurcation Induced Transitions in\\Asymptotically Slow-Fast Systems}
\author{S.~Jelbart\thanks{School of Computation, Information and Technology, Technical University of Munich, Garching, Germany.}}


\begin{document}

	\maketitle
	

	\begin{abstract}
		This work provides a geometric approach to the study of bifurcation and rate induced transitions in a class of non-autonomous systems referred to herein as \textit{asymptotically slow-fast systems}, which may be viewed as `intermediate' between the (smaller resp.~larger) classes of asymptotically autonomous and non-autonomous systems. After showing that the relevant systems can be viewed as singular perturbations of a limiting system with a discontinuity in time, we develop an analytical framework for their analysis based on geometric blow-up techniques. We then provide sufficient conditions for the occurrence of bifurcation and rate induced transitions in low dimensions, as well as sufficient conditions for `tracking' in arbitrary (finite) dimensions, i.e.~the persistence of an attracting and normally hyperbolic manifold through the transitionary regime. The proofs rely on geometric blow-up, a variant of the Melnikov method which applies on non-compact domains, and general invariant manifold theory. The formalism is applicable in arbitrary (finite) dimensions, and for systems with forward and backward attractors characterised by non-trivial (i.e.~non-constant) dependence on time. The results are demonstrated for low dimensional applications.
	\end{abstract}
	
	
	\unmarkedfntext{\textbf{Keywords:} Critical transitions, singular perturbations, geometric blow-up, non-autonomous systems, tipping phenomena}
	
	\unmarkedfntext{\noindent \textbf{MSC2020:} 34E15, 34A26, 37D10, 34C45, 37C60.}

	\section{Introduction}
	\label{sec:introduction}
	
	Bifurcation and rate induced transitions in non-autonomous dynamical systems have received a lot of attention in recent years, due in part to the direct relevance of these systems to important social and economic problems like climate change; see e.g.~\cite{Ashwin2012,Lenton2011,Lenton2008,Scheffer2020,Wieczorek2011} and the many references therein. Significant progress has been made for \textit{asymptotically autonomous systems} of the form
	\begin{equation}
		\label{eq:asymptotically_autonomous_system}
		x' = f(x, \gamma(\mu t), \sigma) , \qquad 
		x \in \R^n ,
	\end{equation}
	where the dash denotes differentiation with respect to time $t$, the function $f : \R^n \times \R \times \Lambda \to \R^n$ is smooth, $\sigma \in \Lambda \subset \R^p$ is a vector of parameters, $\mu > 0$ is the so-called \textit{rate parameter} and $\gamma : \R \to \R$ is a (typically sigmoidal) $C^1$ ramp (a.k.a.~compactification or regularisation) function satisfying
	\[
	\lim_{s \to \pm \infty} \gamma(s) = 
	\begin{cases}
		\gamma_- , & s \to -\infty, \\
		\gamma_+ , & s \to \infty ,
	\end{cases}
	\qquad 
	\lim_{s \to \pm \infty} \gamma'(s) = 0 ,
	\]
	\rev{where $\gamma_\pm$ are constants.} System \eqref{eq:asymptotically_autonomous_system} is called asymptotically autonomous because it converges to `past' and `future' ODE systems that are autonomous as $t \to - \infty$ and $t \to \infty$, namely the systems
	\[
	x' = f(x, \gamma_-, \sigma) , \qquad 
	x' = f(x, \gamma_+, \sigma) ,
	\]
	respectively. This feature is crucial for the application of classical dynamical systems methods, e.g.~center manifold theory and regular perturbation approaches, which can be applied at infinity after a suitable compactification of time.\footnote{\rev{Strictly speaking, this is incorrect. The compactification is actually applied to an auxiliary variable $s = g(t) \in (-1,1)$, for a sufficiently smooth and invertible function $g$, in an extended autonomous system which involves $s$ as an additional dependent variable. The compactification allows for a smooth extension of the extended vector field up to the limiting values $\lim_{t \to \pm \infty} g(t) = \pm 1$.}} In this context, we refer to \cite{Ashwin2017,Ashwin2012,Kuehn2022} for further details and important examples of bifurcation and rate induced transitions in 1-dimensional systems, \cite{Alkhayuon2018} for an example of rate induced transitions between non-equilibrium states of the past and future equations, \rev{\cite{Kiers2020,Wieczorek2023}} for higher dimensional systems and \cite{Chen2023,Wieczorek2023,Wieczorek2021} for details on the compactification of these systems in arbitrary (finite) dimensions (see also \cite{Matsue2023}). Informally, we say that a bifurcation (rate) induced transition occurs if the forward evolution of a particular attractor of the past system under system \eqref{eq:asymptotically_autonomous_system} undergoes an important qualitative change as $t \to \infty$ as a result of varying the bifurcation parameter $\sigma$ (rate parameter $\mu$). We shall also refer to \textit{critical transitions} in cases where we need not specify the underlying mechanism.
	
	Recently, the authors in \cite{Duenas2023c,Duenas2023b,Duenas2023,Longo2022,Longo2023,Longo2021} have considered critical transitions in scalar non-autonomous systems of the form
	\begin{equation}
		\label{eq:nonautonomous_system}
		x' = f(x, \gamma(\mu t), \sigma, t) 
	\end{equation}
	which, in contrast to the asymptotically autonomous system \eqref{eq:asymptotically_autonomous_system}, feature past and future limits which depend non-trivially on time as $t \to \pm \infty$ (note the additional dependence on $t$ in the fourth argument). The additional generality is important for applications which exhibit non-trivial time-dependence before and/or after the `tipping event'. More generally, these works show that a wide variety \rev{of} critical transitions can be defined, identified and classified using the skew-product formalism together with notions from non-autonomous bifurcation theory; we refer to \cite{Anagnostopoulou2023} and the many references therein for background. Detailed analytical treatment of problems within this class remains a highly non-trivial task, and 
	current analyses in this setting have, to the best of the authors knowledge, only been undertaken for scalar equations with additional specific structure. 
	This is due, in part, to the fact that powerful dynamical systems methods that have been developed for autonomous systems cannot in general be applied here, \rev{in contrast to asymptotically autonomous systems \eqref{eq:asymptotically_autonomous_system} described above.} 
	
	Thus, there is a tension between analytical tractability and sufficient generality. 
	A primary aim of this work, is to demonstrate that a compromise which balances tractability with generality can be found by considering an `intermediate' class of non-autonomous systems in the general form
	\begin{equation}
		\label{eq:asymptotically_sf_systems}
		x' = f(x, \gamma (\mu t), \sigma, \eps t, \eps) ,
	\end{equation}
	where $0 < \eps \ll 1$ is a small perturbation parameter. System \eqref{eq:asymptotically_sf_systems} defines a class of dynamical systems which is strictly larger than the class of asymptotically non-autonomous systems \eqref{eq:asymptotically_autonomous_system}, but strictly smaller than the class of non-autonomous systems \eqref{eq:nonautonomous_system}. We shall hereafter refer to systems of the form \eqref{eq:asymptotically_sf_systems} as \textit{asymptotically slow-fast systems}, since it can be shown that the forward and backward limits in time are described by slow-fast systems.\footnote{\rev{This includes but is not limited to the class of slow-fast asymptotically autonomous systems of the form $x' = f(x, \gamma(\mu t), \sigma, \eps)$, for which the dynamics in the inner `transitionary regime' is itself slow-fast as $\eps \to 0$; see e.g.~\cite{O'Sullivan2023} for a recent application within this class.}} The time-scale separation is a consequence of the fact that the `transition time-scale' $t$ over which \rev{$\gamma(\mu t)$} varies from $\gamma(\mu t) \sim \gamma_-$ to $\gamma(\mu t) \sim \gamma_+$, is very fast relative to to the time-scale $\eps t$ associated with time-dependence of the past and future systems. With regard to the tension between tractability and generality outlined above, it is significant to note that for asymptotically slow-fast systems of the form \eqref{eq:asymptotically_sf_systems} we can both (i) incorporate non-trivial time-dependence as $t \to \infty$, and (ii) use established and powerful theory for slow-fast systems \cite{Fenichel1979,Jones1995,Kuehn2015,Wechselberger2019,Wiggins2013} as a rigorous and geometrically informative approach to understanding the past and future dynamics. In this context, we also refer to \cite{Wechselberger2013} for a detailed exposition on the use of \textit{geometric singular perturbation theory (GSPT)} to study slowly non-autonomous systems without a ramp (i.e.~in systems \eqref{eq:asymptotically_autonomous_system} with no $\gamma(\mu t)$-dependence), and to \cite{Kuehn2011,Kuehn2013} for more on the applicability of GSPT in the study of (bifurcation and noise induced) critical transitions.
	
	The primary aim of this work is to make the first steps in the development of a geometric framework for the study of asymptotically slow-fast systems \eqref{eq:asymptotically_sf_systems}. An important feature of these systems is that they can be shown to converge to systems that are \textit{piecewise-smooth (PWS)} in time as $\eps \to 0$. In essence, the time-scale associated with the transition from $\gamma(\mu t) \sim \gamma_-$ to $\gamma(\mu t) \sim \gamma_+$ becomes instantaneous as $\eps \to 0$, and we are left with distinct slow-fast systems on either side of the transition which correspond to the backward and forward limits in time. Of course, this is a very singular limit. We then use the \textit{geometric blow-up method} in order to resolve the loss of smoothness as $\eps \to 0$. This is based on an approach which has been developed in recent years for the study of both regularised PWS systems and smooth systems which converge to PWS systems, see e.g.~\cite{Carvalho2011,Huzak2023,Jelbart2021,Jelbart2020d,Kristiansen2019c,Kristiansen2015a,Kristiansen2019d,Llibre2009}. Here it \rev{is} worthy to note that the blow-up transformation is used to resolve a loss of smoothness, and not \rev{a} loss of hyperbolicity as in the well-known references \cite{Dumortier1996,Krupa2001a,Krupa2001c,Krupa2001b,Szmolyan2001,Szmolyan2004}. In the context of asymptotically slow-fast systems \eqref{eq:asymptotically_autonomous_system}, we shall show that the geometric blow-up naturally incorporates a compactification of the transitionary regime which is similar to that developed in \cite{Chen2023,Wieczorek2023,Wieczorek2021}, as well as providing a geometric way of connecting or `matching' to the outer dynamics of the limiting systems in forward and backward time.
	
	We shall use this approach in order to state sufficient conditions for the transversal intersection of 1-dimensional \rev{locally} invariant manifolds which, for scalar systems \eqref{eq:asymptotically_autonomous_system}, correspond to either bifurcation or rate induced transitions. \rev{Typically, these 1-dimensional manifolds are obtained as \textit{slow manifolds} in the outer regimes, and subsequently extended into the inner regime. After blow-up, the problem of identifying intersections of these manifolds becomes very similar to the problem of identifying \textit{canard solutions} in classical slow-fast systems \cite{DeMaesschalck2021,Dumortier1996,Krupa2001a,Krupa2001b,Szmolyan2001,Wechselberger2012} (where canards arise due to the intersection of attracting and repelling slow manifolds)}, except in the context of systems with a PWS singular limit. A clear analogy to the well-known scenario involving heteroclinic connections in 1-dimensional asymptotically autonomous systems \eqref{eq:asymptotically_autonomous_system} appears (see e.g.~\cite{Ashwin2017,Kuehn2022}), insofar as intersections correspond to the transversal breaking of heteroclinic solutions identified in a nearby (but auxiliary) system which is defined on the blow-up surface. Although we are able to state the result for arbitrary dimensions, 
	`tipping' usually only occurs if the aforementioned heteroclinic connection is codimension-1, i.e.~if it forms a separatrix. In addition to geometric blow-up, our primary tool for the proof is the variant of Melnikov theory which was developed for problems on non-compact domains in \cite{Wechselberger2002}; see also \cite{Krupa2001c} for an earlier formulations in low dimensions which is surprisingly relevant in our setting.
	
	Finally, we provide sufficient conditions for `tracking' or, more precisely, the persistence of an attracting normally hyperbolic manifold through the transitionary regime. It is worthy to note that although the problem is slow-fast on either side of the transitionary regime, Fenichel theory cannot be applied in a neighbourhood of the transition itself due to the loss of smoothness there as $\eps \to 0$. For this reason, we appeal to geometric blow-up techniques together with the more the general invariant manifold theory used in \cite{Nipp2013} (this is the formulation that we use, however see \cite{Hirsch1970,Shub2013} for earlier work and additional references).
	
	\
	
	The manuscript is structured as follows: In Section \ref{sec:setup_and_assumptions} we define and formulate the class of asymptotically slow-fast systems \eqref{eq:asymptotically_sf_systems}. This includes an analysis of the corresponding PWS system and the slow-fast systems corresponding to the forward and backward limits in time. Section \ref{sec:main_results} is dedicated to the statement and explanation of the main results. The geometric blow-up, which may be considered as both part of the mathematical formalism and part of the proof of the main results, is presented in Section \ref{sec:geometric_blow-up}. Section \ref{sec:proofs} is devoted to the proofs of the main results: Results on bifurcation and rate induced transitions are proven in Section \ref{sec:proof_of_thm_tipping}, and results on tracking and persistent normally hyperbolic manifolds are proven in Section \ref{sec:proof_of_thm_normal_hyperbolicity}. Our results are applied to low dimensional examples in Section \ref{sec:example}. Finally in Section \ref{sec:summary_and_outlook}, we summarise our discuss our findings.

	\section{Setup and assumptions}
	\label{sec:setup_and_assumptions}
	
	We consider ODE systems of the form
	\begin{equation}
		\label{eq:main}
		x' = \frac{dx}{dt} = f \left(x, \gamma(\mu t), \eps t, \sigma, \eps \right) ,
	\end{equation}
	where $x \in \mathbb R^n$, $f:\mathbb R^n \times \mathbb R \times \R \times \Lambda \times \rev{[0,\eps_0)} \to \mathbb R^n$ is $C^k$-smooth ($k$ will be assumed to be sufficiently large throughout), $\mu \in \Theta \subset \R$ is a positive `rate parameter', $\sigma \in \Lambda \subset \R$ is a bifurcation parameter, $\Lambda$ and $\Theta$ are bounded open intervals and \rev{$\eps \in [0,\eps_0)$} is a small time-scale parameter. Generalisations to the multi-parameter case with $\sigma \in \R^p$ where $p > 1$ are straightforward but omitted for simplicity. The function $\gamma:\mathbb R \to \mathbb R$ is a bounded and $C^k$ function which satisfies
	\begin{equation}
		\label{eq:gamma_limits}
		\gamma(z) \to 
		\begin{cases}
			1^- & \text{as } z \to \infty , \\
			0^+ & \text{as } z \to -\infty ,
		\end{cases}
	\end{equation}
	with
	\begin{equation}
		\label{eq:gamma_asymptotics}
		\gamma(-z^{-1}) = O(z^l) , \qquad 
		\gamma(z^{-1}) = 1 + O(z^l) ,
	\end{equation}
	as $z \to 0^+$ for some positive integer $l$. We take a dynamical systems approach and rewrite \eqref{eq:main} as an autonomous system in $n + 1$ variables by defining
	\begin{equation}
		\label{eq:s}
		s := \eps t , \qquad 0 < \eps \ll 1 ,
	\end{equation}
	and considering the system
	\begin{equation}
		\label{eq:main_lifted}
		\begin{split}
			x' &= 
			f\left( x, \gamma\left(\mu s / \eps \right) , s, \sigma, \eps \right) , \\
			s' &= 
			\eps ,
		\end{split}
	\end{equation}
	\rev{with $\eps \in (0, \eps_0)$.} The key observation at this point is that system \eqref{eq:main_lifted} is a slow-fast system with a PWS singular limit. Specifically, taking $\eps \to 0$ leads to
	\begin{equation}
		\label{eq:main_pws}
		\begin{pmatrix}
			x' \\
			s' 
		\end{pmatrix}
		=
		\begin{cases}
			\begin{pmatrix}
				f_-(x,s,\sigma) \\
				0 
			\end{pmatrix}
			& \quad \text{for} \quad s < 0 , \\
			\begin{pmatrix}
				f_+(x,s, \sigma) \\
				0
			\end{pmatrix}
			& \quad \text{for} \quad s > 0 , 
		\end{cases}
	\end{equation}
	where we defined $f_-(x,s,\sigma) := f(x,0,s,\sigma,0)$ and $f_+(x,s,\sigma) := f(x,1,s,\sigma,0)$. For a generic right-hand side, system \eqref{eq:main_pws} is discontinuous along a codimension-1 ($n$-dimensional) \textit{switching manifold}
	\[
	\Sigma := \left\{ (x,0) : x \in \mathbb R^n \right\} \subset \mathbb R^{n+1} .
	\]
	Notice that $s \in \R$ is a parameter for the limiting PWS system \eqref{eq:main_pws}. Generically, if the 1-parameter families of vector fields defined by $x' = f_\pm(x,s,\sigma)$ have isolated equilibria in $\R^n$, then system \eqref{eq:main_pws} can be expected to posses a union of 1-dimensional \textit{critical manifolds} in $\R^{n+1}$. We shall introduce a separate notation for critical manifolds on $\{s < 0\}$ and $\{s > 0\}$, namely
	\begin{equation}
		\label{eq:critical_manifolds}
		\begin{split}
			\rev{\mathcal C^-} &:= \left\{ (x,s) \in \R^n \times \R_- : f_-(x,s,\sigma) = 0 \right\} , \\
			\rev{\mathcal C^+} &:= \left\{ (x,s) \in \R^n \times \R_+ : f_+(x,s,\sigma) = 0 \right\} ,
		\end{split}
	\end{equation}
	respectively, where $\R_+ := [0,\infty)$ and $\R_- := (-\infty,0]$ (note that we include their extension up to $s = 0$). An example of one possible limiting situation is sketched in Figure \ref{fig:PWS}.
	
	\begin{remark}
		The asymptotics in \eqref{eq:gamma_asymptotics} imply that
		\begin{equation}
			\label{eq:f_pm_asypmtotics}
			x' = f(x, \gamma(\mu s/\eps), s, \sigma, \eps) = 
			\begin{cases}
				f_-(x,s,\sigma) + O(\eps) & \text{as } \eps \to 0 \text{ on } \{s < 0\} , \\
				f_+(x,s,\sigma) + O(\eps) & \text{as } \eps \to 0 \text{ on } \{s > 0\} ,
			\end{cases}
		\end{equation}
		which leads to
		\[
		\begin{pmatrix}
			x' \\
			s'
		\end{pmatrix} 
		\sim 
		\begin{cases}
			\begin{pmatrix}
				f_-(x,s,\sigma) + \rev{O(\eps)} , \\
				\eps
			\end{pmatrix} , & s < 0 , \\
			\begin{pmatrix}
				f_+(x,s,\sigma) + \rev{O(\eps)} , \\
				\eps
			\end{pmatrix} , & s > 0 , 
		\end{cases}
		\]
		as $\eps \to 0$. This representation highlights the fact that the asymptotic dynamics on either side of $\Sigma$ are slow-fast, which motivates the \textit{asymptotically slow-fast} terminology.
	\end{remark}
	
	The preceding observations show that the limit as $\eps \to 0$ is singular in two ways: Firstly, the equations on $\R^{n+1} \setminus \Sigma$ are slow-fast. Secondly, the non-smooth limit indicates that an abrupt transition occurs as solutions traverse an $\eps$-dependent neighbourhood of $\Sigma$. This transition is condensed into $\Sigma$ in the limit $\eps \to 0$. Although it appears as a loss of smoothness along $\Sigma$ when $\eps \to 0$, it is worthy to note that this highly singular feature is actually a natural consequence of the time-scale separation which is inherent in equations \eqref{eq:main} and \eqref{eq:main_lifted}. Specifically, it follows from the assumption that the transition occurs on a time-scale $t$ which is fast relative to the variation which occurs with time-scale $\eps t$ (cf.~the second and third arguments of $f$ in equation \eqref{eq:main}). This leads to three distinct regimes:
	\begin{itemize}
		\item A \textit{left-outer regime} bounded within $\{ s < 0 \}$;
		\item An \textit{inner/transitionary regime} $\{ s = O(\eps) \}$;
		\item A \textit{right-outer regime} bounded within $\{ s > 0 \}$.
	\end{itemize}
	Since $s' > 0$ for all $\eps > 0$, solutions with initial conditions in the left-outer regime traverse the inner transitionary regime before passing into the right-outer regime.
	
	\begin{remark}
		\label{rem:S_dimensions}
		Generically, the critical manifolds \rev{$\mathcal C^\pm$} are 1-dimensional. Higher dimensional critical manifolds can be expected in applications if the vector fields $f_\pm$ are themselves slow-fast. For example, if $x = (x_1, x_2) \in \R^{n_1} \times \R^{n_2}$ and
		\[
		f_-(x,s,\sigma) = 
		\begin{pmatrix}
			h(x_1,x_2,s,\sigma) \\
			0
		\end{pmatrix}
		\]
		for some smooth function $h : \R^{n_1} \times \R^{n_2} \times \R \times \Lambda \to \R^{n_1}$, then $\rev{\mathcal C^-}$ is defined by the condition $h(x_1, x_2, s, \sigma) = 0$ and is, therefore, $(n_2 + 1)$-dimensional. In this work we focus on the generic situation of 1-dimensional critical manifolds \rev{$\mathcal C^\pm$}, but we expect the methods and formalism to extend naturally to the higher dimensional setting.
	\end{remark}
	
	
	\begin{figure}[t!]
		\centering
		\includegraphics[scale=0.35]{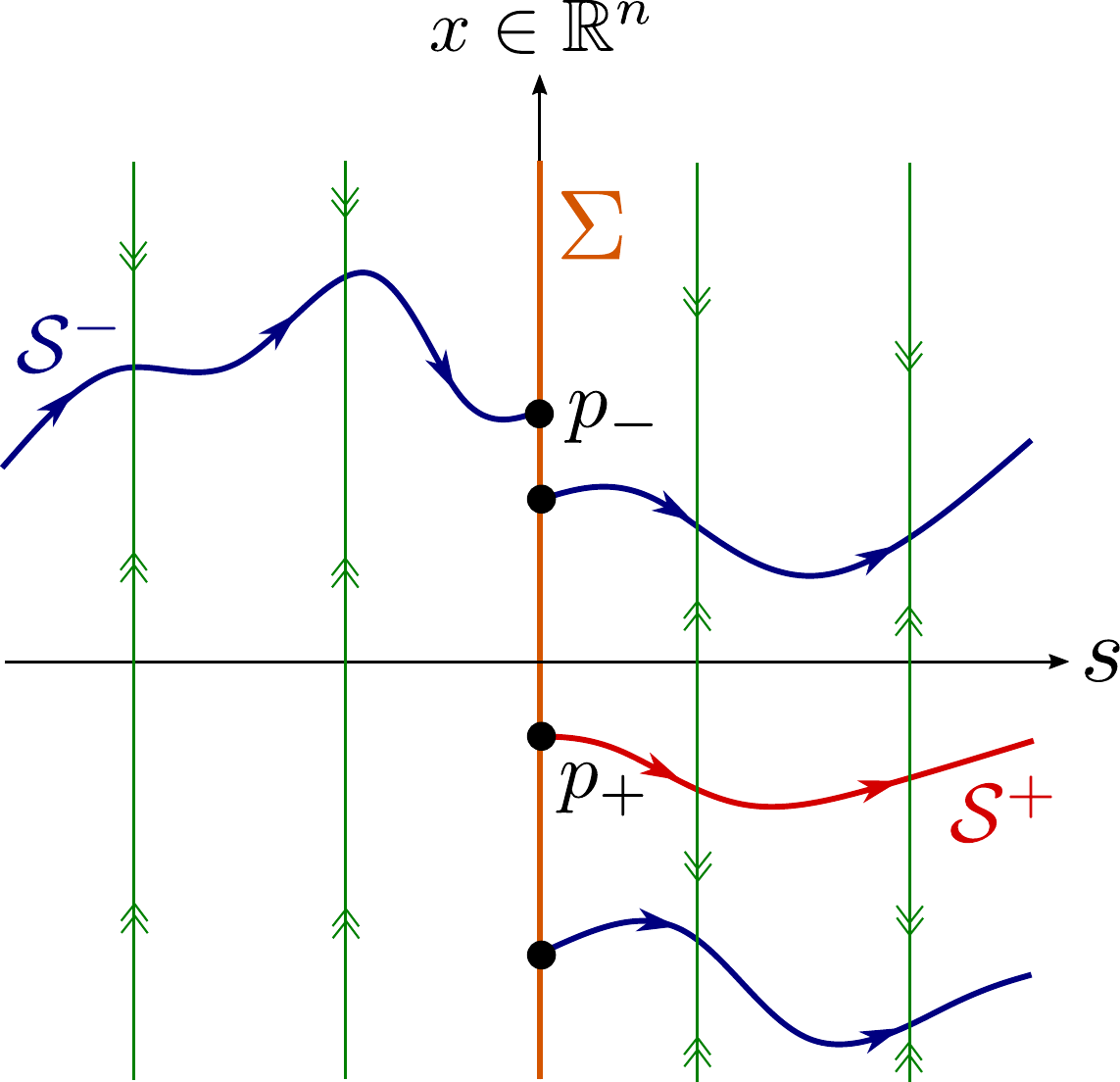}
		\caption{Possible geometry and dynamics for the limiting problem \eqref{eq:main_pws}, which is PWS with switching manifold $\Sigma = \{ s = 0 \}$ (shown in dark orange). We sketch the layer and reduced dynamics in both the left- and right-outer regimes $\{ s < 0 \}$ and $\{ s > 0 \}$ respectively. Fast fibers are parallel to the $x$-axis/axes, and shown in green with double arrows to indicate fast/hyperbolic flow. We sketch the case in which $\rev{\mathcal C^-}$ consists of a single attracting branch, and $\rev{\mathcal C^+}$ consists of 2 attracting branches and one repelling branch. Attracting (repelling) branches are sketched in blue (red). The reduced flow on both \rev{$\mathcal C^\pm$} is governed by $\dot s = 1$, and indicated with single arrows. Only the branches $\mathcal S^\pm \subseteq \rev{\mathcal C^\pm}$ that are subject to Assumption \ref{ass:normal_hyperbolicity} and \rev{their} intersections $p_\pm$ with $\Sigma$ are notated. Similar conventions on the arrows and colouring will be used throughout.}
		\label{fig:PWS}
	\end{figure}

	We now introduce an assumption on the existence and stability of the critical manifolds \rev{$\mathcal C^\pm$} close to $\Sigma$. It can be viewed as a kind of regularity or genericity condition.
	
	\begin{assumption}
		\label{ass:normal_hyperbolicity}
		The intersections $\rev{\mathcal C^\pm} \cap \Sigma$ are non-empty for all $\sigma \in \Lambda$. In particular, we have
		\[
		p_- : (x_-,0) \in \rev{\mathcal C^-} \cap \Sigma , \qquad
		p_+ : (x_+,0) \in \rev{\mathcal C^+} \cap \Sigma ,
		\]
		where $x_\pm \in \R^n$. Moreover, the eigenvalues of the $n \times n$ matrices $\Diff_x f_-(p_-)$ and $\Diff_x f_+(p_+)$ are uniformly bounded away from the imaginary axis for all $\sigma \in \Lambda$.
	\end{assumption}
	
	\begin{remark}
		For many problems it is sufficient to verify Assumption \ref{ass:normal_hyperbolicity} for a specific $\sigma$ and subsequently choose $\Lambda$ small enough to ensure that it holds for all $\sigma \in \Lambda$.
	\end{remark}
	
	Given Assumption \ref{ass:normal_hyperbolicity}, a direct application of the implicit function theorem shows that $\rev{\mathcal C^-}$ ($\rev{\mathcal C^+}$) contains a branch $\mathcal S^-$ ($\mathcal S^+$) which intersects $\Sigma$ transversally as $s \to 0^-$ ($s \to 0^+$). In particular, there exists some $\rho > 0$ and two smooth (parameter-dependent) functions $h^- : [-\rho, 0] \times \Lambda \to \R^n$ and $h^+ : [0,\rho] \times \Lambda \to \R^n$ such that
	\begin{equation}
		\label{eq:mathcal_S}
		\mathcal S^- = \left\{ (h^-(s, \sigma), s) : s \in [-\rho, 0] \right\} , \qquad
		\mathcal S^+ = \left\{ (h^+(s, \sigma), s) : s \in [0, \rho] \right\} .
	\end{equation}
	For $\rho > 0$ sufficiently small, Assumption \ref{ass:normal_hyperbolicity} guarantees that $\mathcal S^- \setminus \Sigma$ ($\mathcal S^+ \setminus \Sigma$) is normally hyperbolic, with the same `stability type' for all $s \in (-\rho,0)$ ($s \in (0,\rho)$). The reduced flow on both $\mathcal S^\pm$ (and on all other branches of \rev{$\mathcal C^\pm$}) can be expressed in the $s$-coordinate chart; it has the simple form
	\[
	\dot s = 1, 
	\]
	where the dot denotes differentiation with respect to the slow time $\tau = \eps t = s$. Thus in particular, the reduced flow on $\mathcal S^-$ approaches $\Sigma$ from the left, while the reduced flow on $\mathcal S^+$ is oriented away from $\Sigma$.
	
	\begin{remark}
		Fenichel theory applies to compact normally hyperbolic submanifolds of \rev{$\mathcal C^\pm$} which are bounded away from $\Sigma$, but it cannot be applied in a neighbourhood of $\Sigma$ because of the loss of smoothness there as $\eps \to 0$.
	\end{remark}
	
	\begin{remark}
		If $\mathcal S^-$ ($\mathcal S^+$) are normally hyperbolic and attracting for all $s < 0$ ($s > 0$), then the nearby Fenichel slow manifold constitutes a pullback (pushforward) attractor of system \eqref{eq:main} if an additional `pinning condition' which uniquely determines the slow manifold is satisfied as $t \to - \infty$ ($t \to \infty$). Since we shall be primarily interested in the dynamics in a tubular neighbourhood of $\Sigma$, our analysis is local in time on the fast time-scale $t$, and we need not concern ourselves with the existence of pullback or pushforward attractors.
	\end{remark}
	
	Finally, we need an assumption which applies to the limiting system obtained in the inner regime, where $s = O(\eps)$. We return to the original equation \eqref{eq:main}, with \rev{$t = s / \eps$}, and consider \rev{the} limiting equation obtained by setting $\eps = 0$, namely
	\begin{equation}
		\label{eq:limit_problem}
		x' = f(x, \gamma(\mu t), 0, \sigma, 0) .
	\end{equation}
	System \eqref{eq:limit_problem} is asymptotically autonomous and, as we shall see, plays an important role in determining whether or not bifurcation or rate induced transitions occur. We will be interested in three different cases, each of which are distinguished in the following assumption.
	
	\begin{assumption}
		\label{ass:heteroclinic}
		One of the following assertions is true:
		\begin{enumerate}
			\item[(I)] The limiting system \eqref{eq:limit_problem} has a unique solution $\varphi(t)$ such that
			\begin{equation}
				\label{eq:varphi_limit}
				\lim_{t \to \pm \infty} \varphi(t) = x_\pm ,
			\end{equation}
			for all $\sigma \in \Lambda$ and $\mu \in \Theta$, where $x_\pm \in \R^n$ are the same two points appearing in Assumption \ref{ass:normal_hyperbolicity}.
			\item[(II)] Fix $\sigma \in \Lambda$. There exists a critical value $\mu_c \in \Theta$ such that the limiting problem \eqref{eq:limit_problem}$|_{\mu = \mu_c}$ has a unique solution $\varphi(t)$ satisfying \eqref{eq:varphi_limit}.
			\item[(III)] Fix $\mu \in \Theta$. There exists a critical value $\sigma_c \in \Lambda$ such that the limiting problem \eqref{eq:limit_problem}$|_{\sigma = \sigma_c}$ has a unique solution $\varphi(t)$ satisfying \eqref{eq:varphi_limit}.
		\end{enumerate}
	\end{assumption}
	
	The special solution $\varphi(t)$ in (I), (II) and (III) will play an important role in the identification of rate and bifurcation induced transitions in system \eqref{eq:main_lifted}.
	
	\begin{remark}
		The blow-up analysis in Section \ref{sec:geometric_blow-up} will show that verifying Assumption \ref{ass:heteroclinic} is equivalent to the identification of a heteroclinic connection between \rev{equilibria induced by the points} $p_-$ and $p_+$ in \rev{a compactification of the autonomous system obtained from} the limiting problem \eqref{eq:limit_problem} \rev{after introducing the additional dependent variable $s_2 = t$}. As is typical for global bifurcations, \rev{finding the relevant heteroclinic} is expected to be difficult in applications which do not exhibit additional structure, e.g.~symmetries or slow-fast structure, which allow for the identification of key global objects.
	\end{remark}

	\section{Main results}
	\label{sec:main_results}
	
	We are now in a position to state and describe our main results. We present (i) results on bifurcation and rate induced transitions, and (ii) sufficient conditions for normally hyperbolic connections from $s < 0$ to $s > 0$ (in which case no such transition can occur). We begin with the former.

	\subsection{Bifurcation and rate induced transitions}
	
	Our main results on bifurcation and rate induced transitions will be formulated as sufficient conditions for the intersection of distinguished locally invariant manifolds which perturb from $\mathcal S^\pm$ under variation in $\mu$ or $\sigma$. We begin with a simple existence statement for these manifolds, deferring the detailed statement, which is formulated in the blown-up space, to Section \ref{sec:geometric_blow-up}.
	
	\begin{lemma}
		\label{lem:slow_manifolds}
		Consider system \eqref{eq:main_lifted} under Assumption \ref{ass:normal_hyperbolicity}. There is an $\eps_0 > 0$ such that for all $\eps \in (0,\eps_0)$, there exists 1-dimensional locally invariant manifolds $\mathcal S^\pm_\eps$ such that $\mathcal S^-_\eps|_{s \in [-\rho,0)} \to \mathcal S^-$ and $\mathcal S^+_\eps|_{s \in (0, \rho]} \to \mathcal S^+$ in the $C^k$ metric as $\eps \to 0$.
	\end{lemma}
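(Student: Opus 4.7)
The plan is to derive the lemma from classical Fenichel theory applied on compact subintervals of $[-\rho,0)$ and $(0,\rho]$ that are bounded away from $\Sigma$, and then to extend the resulting slow manifolds up to the open endpoint at $s=0$ using the flow of \eqref{eq:main_lifted}. First I would observe that on any region of the form $\{s\in[-\rho,-\delta]\}$ with fixed $\delta>0$, the substitution $z=\mu s/\eps$ yields $z\to-\infty$ as $\eps\to 0$, so the asymptotics \eqref{eq:gamma_asymptotics} force $\gamma(\mu s/\eps)$ and all of its $s$-derivatives to be $O(\eps^l)$ uniformly in $s$ and $\sigma$. Combined with the smoothness of $f$ and the expansion \eqref{eq:f_pm_asypmtotics}, this shows that on such a region system \eqref{eq:main_lifted} is a genuine $C^k$-regular $O(\eps)$-perturbation of the standard slow-fast system $x'=f_-(x,s,\sigma)$, $s'=\eps$; the analogous statement holds on $\{s\in[\delta,\rho]\}$ with $f_+$ in place of $f_-$.

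By Assumption \ref{ass:normal_hyperbolicity}, the restrictions $\mathcal S^-|_{[-\rho,-\delta]}$ and $\mathcal S^+|_{[\delta,\rho]}$ are compact, $1$-dimensional, normally hyperbolic critical manifolds of uniform stability type for the respective unperturbed slow-fast systems. Fenichel's theorem, in the formulation of e.g.~\cite{Fenichel1979,Jones1995,Kuehn2015}, therefore supplies, for all sufficiently small $\eps>0$, locally invariant $1$-dimensional slow manifolds $\mathcal S^\pm_{\eps,\delta}$ contained in these intervals which $C^k$-converge to $\mathcal S^\pm|_{[\mp\rho,\mp\delta]}$ as $\eps\to 0$. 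This already establishes the lemma on any compact subset of $[-\rho,0)\cup(0,\rho]$ that is bounded away from $\Sigma$.

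To reach any prescribed $s^\star\in(-\delta,0)$, I would then extend $\mathcal S^-_{\eps,\delta}$ by the full flow of \eqref{eq:main_lifted}: since $s'=\eps>0$, a trajectory starting at a point of $\mathcal S^-_{\eps,\delta}\cap\{s=-\delta\}$ reaches $s^\star$ in finite time, and by invariance the resulting set defines a locally invariant manifold $\mathcal S^-_\eps\subset\{s\in[-\rho,0)\}$ of class $C^k$ for each fixed $\eps>0$. For any $0<\delta'<\rho$ the restriction $\mathcal S^-_\eps|_{[-\rho,-\delta']}$ coincides with the independently constructed $\mathcal S^-_{\eps,\delta'}$ up to the usual exponentially small Fenichel ambiguity, so the $C^k$ convergence to $\mathcal S^-|_{[-\rho,-\delta']}$ is inherited from the previous step; letting $\delta'\to 0$ yields the convergence on compact subsets of $[-\rho,0)$ claimed in the lemma. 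The construction of $\mathcal S^+_\eps$ on $(0,\rho]$ is entirely analogous but uses backward integration from $\{s=\delta\}$, which is well-defined for any fixed $\eps>0$ and monotonically decreases $s$ toward $0^+$; whether $\mathcal S^+$ is attracting or repelling in the normal direction plays no role in this step, as one is merely following orbits over a finite $t$-interval.

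The main obstacle, and the reason the lemma is only stated in this preliminary form here, is that the approach above gives no uniform $C^k$-control up to or across $\Sigma$: the ramp $\gamma(\mu s/\eps)$ is simply not $C^k$-close to its piecewise-constant limit on any fixed neighbourhood of $\{s=0\}$, and the Fenichel estimates on $[-\rho,-\delta']$ necessarily deteriorate as $\delta'\to 0$. A sharpened and uniform description of $\mathcal S^\pm_\eps$ \emph{through} the transitionary regime is therefore deferred to the blown-up formulation of Section \ref{sec:geometric_blow-up}, where the loss of smoothness at $\{\eps=0\}\cap\Sigma$ is desingularised and the two manifolds can be compared as orbits of a single smooth extended vector field.
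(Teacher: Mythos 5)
Your argument is correct, but it takes a genuinely different route from the paper. You prove the lemma via classical Fenichel theory on compact subintervals $[-\rho,-\delta]$ bounded away from $\Sigma$ (where \eqref{eq:gamma_asymptotics} makes the ramp $O(\eps^l)$-close to its piecewise-constant limit, so the system is an honest $C^k$-regular perturbation of a slow-fast system), and then extend by the flow toward $s=0$. The paper instead proves the lemma by working in the blown-up coordinate charts $K_1$ and $K_3$: Lemmas~\ref{lem:K1_cm} and \ref{lem:K3_cm} produce two-dimensional $C^k$ center manifolds $\mathcal M_1^-$ and $\mathcal M_3^+$ at the partially hyperbolic points $P_\pm$ on the blow-up cylinder, and $\mathcal S^\pm_\eps$ are obtained by blowing down these center manifolds. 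Interestingly, the paper acknowledges your route explicitly in the remark following the lemma, where it notes that $\mathcal S^\pm_\eps$ can alternatively be obtained as extensions of Fenichel slow manifolds when $\mathcal S^\pm$ are normally hyperbolic away from $\Sigma$ (which holds for $\rho$ small, as the paper establishes from Assumption~\ref{ass:normal_hyperbolicity}). The trade-off is exactly the one you identify in your final paragraph: the Fenichel-plus-flow construction gives existence and $C^k$-convergence on compact subsets of $[-\rho,0)$, but the estimates degenerate as $s\to 0^-$, whereas the center-manifold construction in the blown-up space gives uniform $C^k$ control in the $(r_1,\eps_1)$-coordinates all the way up to (and past) $r_1=0$, which is precisely what is needed later when matching to chart $K_2$ in the proof of Theorem~\ref{thm:tipping}. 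So the paper treats the blow-up construction not as an optional sharpening but as the actual proof, with Fenichel relegated to a remark; your write-up inverts that emphasis.
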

	
	The existence and properties of the manifolds $\mathcal S^\pm_\eps$ can be shown directly in the blown-up space using center manifold theory; we refer to Lemmas \ref{lem:K1_cm} and \ref{lem:K3_cm} for a more detailed formulation and proof. \rev{In the following, we say that a bifurcation (rate) induced transition occurs if there exists a critical value $\sigma = \sigma_c$ ($\mu = \mu_c$) such that the asymptotics of the forward extension of $\mathcal S^-_\eps$ into $\{ s > 0 \}$ differ significantly as $\sigma$ ($\mu$) is varied over $\sigma_c$ ($\mu_c$) shortly after the corresponding solution leaves neighbourhood of $\Sigma$ (i.e.~for small values $s \in (0, \rho]$).}
	
	\begin{remark}
		\rev{In this work we adopt the heuristic `definition' of bifurcation and rate induced transitions outlined above in order to avoid the additional technicalities involved in providing a precise mathematical definition. The reader may find it helpful to keep in mind the `prototypical situation' in which the forward extension of $\mathcal S_\eps^-$ into $\{s > 0\}$ `jumps' from one attracting slow manifold to another under variation in $\sigma$ or $\mu$. This situation is sketched in Figures \ref{fig:PWS}, \ref{fig:heteroclinic}, \ref{fig:blow-up} and occurs in the example considered in Section \ref{sec:example_tipping}.}
	\end{remark}
	
	\begin{remark}
		The manifolds $\mathcal S^\pm_\eps$ can also be obtained as extensions of Fenichel slow manifolds if $\mathcal S^\pm$ are normally hyperbolic in the outer regions bounded away from $\Sigma$, which is the expected scenario in applications. The additional generality is needed here because Assumption \ref{ass:normal_hyperbolicity} only provides a hyperbolicity condition at the points $p_\pm \in \Sigma$. This condition is sufficient to guarantee the spectral condition for normal hyperbolicity in the limiting PWS system \eqref{eq:main_pws} in suitable neighbourhoods about $p_\pm$, but Fenichel theory cannot be applied to conclude the existence of slow manifolds on these neighbourhoods due to the loss of smoothness along $\Sigma$ when $\eps = 0$. Nevertheless, we shall refer to $\mathcal S^\pm_\eps$ as slow manifolds \rev{throughout} in order to simplify the exposition. \rev{Note also that the forward and backward extensions of $\mathcal S^\pm$ may still be normally hyperbolic everywhere (including the transitionary regime), even though Fenichel theory does not apply; we return to this point in Section \ref{sec:proof_of_thm_normal_hyperbolicity}.}
	\end{remark} 
	
	Our aim is to provide sufficient conditions for the regular intersection of (forward and backward extensions of) $\mathcal S^+_\eps$ and $\mathcal S^-_\eps$ under variation in $\mu$ or $\sigma$. \rev{Here and throughout, we refer to an intersection as \textit{regular} if it occurs for a locally isolated value of $\sigma$ or $\mu$ (exactly which of the two parameters will be clear from context)}. As it turns out, the intersection conditions can be formulated in terms of the functions
	\begin{equation}
		\label{eq:mathcal_G_eps}
		\mathcal G_\eps(\mu, \sigma) := \int_{-\infty}^{\infty} \psi_1(t) \left[ t \frac{\partial f}{\partial s}(\varphi(t), \gamma(\mu t), 0, \sigma, 0) + \frac{\partial f}{\partial \eps} (\varphi(t), \gamma(\mu t), 0, \sigma, 0) \right] dt ,
	\end{equation}
	\begin{equation}
		\label{eq:mathcal_G_mu}
		\mathcal G_\mu(\sigma) := 
		\int_{-\infty}^\infty \psi_1(t) t \gamma'(\mu_c t) \frac{\partial f}{\partial \gamma} (\varphi(t), \gamma(\mu_c t),0,\sigma,0) dt ,
	\end{equation}
	and
	\begin{equation}
		\label{eq:mathcal_G_sigma}
		\mathcal G_\sigma(\mu) := 
		\int_{-\infty}^\infty \psi_1(t) \frac{\partial f}{\partial \sigma} (\varphi(t), \gamma(\mu t),0,\sigma_c,0) dt ,
	\end{equation}
	\rev{assuming the existence of $\psi_1(t) \in \R^n$,} an exponentially decaying solution to the \textit{adjoint variational equation}
	\begin{equation}
		\label{eq:adjoint_variational_eqn}
		\psi_1'(t) = - (D_x f)^\transpose\big|_{(\varphi(t),\gamma(\mu t),0,\sigma,0)} \psi_1(t) ,
	\end{equation}
	evaluated at $\mu = \mu_c$ in the case of \eqref{eq:mathcal_G_mu} and $\sigma = \sigma_c$ in the case of \eqref{eq:mathcal_G_sigma}. \rev{The e}xistence of an exponentially decaying solution $\psi_1(t)$ \rev{follows from} the hyperbolicity properties at $x_\pm$ (the limit points as $\varphi(t) \to \pm \infty$); recall Assumption \ref{ass:normal_hyperbolicity}\rev{, if the Jacobian matrices $D_x f_-(p_-)$ ($D_x f_+ (p_+)$) have at least one eigenvalue which has negative (positive) real part bounded away from zero}.\footnote{\rev{The author is grateful to an anonymous reviewer for pointing out that the existence of an exponentially decaying solution $\psi_1(t)$ does not follow from Assumptions \ref{ass:normal_hyperbolicity}-\ref{ass:heteroclinic} alone.}} The quantities in \eqref{eq:mathcal_G_eps}, \eqref{eq:mathcal_G_mu} and \eqref{eq:mathcal_G_sigma} can be viewed as Melnikov integrals, and they appear naturally in the proof of Theorem \ref{thm:tipping} below.
	
	We now state the main result \rev{of this section}.
	
	\begin{thm}
		\label{thm:tipping}
		Consider system \eqref{eq:main} under Assumption \ref{ass:normal_hyperbolicity}\rev{, and assume that the Jacobian matrices $D_x f_-(p_-)$ ($D_x f_+ (p_+)$) have at least one eigenvalue which has negative (positive) real part bounded away from zero. Then t}here exists an $\eps_0 > 0$ such that for all $\eps \in (0,\eps_0)$, we have the following:
		\begin{enumerate}
			\item (Rate-induced transition I) Fix $\sigma \in \Lambda$. If Assumption \ref{ass:heteroclinic}(I) holds and there exists an $\mu_c > 0$ such that $\mathcal G_\eps(\mu_c, \sigma) = 0$ and $( \partial \mathcal G_\eps / \partial \mu)(\mu_c, \sigma) \neq 0$, then there exists a smooth function $\tilde \mu_c : (0,\eps_0) \to \R$ such that the forward and backward extension of $\mathcal S_\eps^-$ and $\mathcal S_\eps^+$ regularly intersect when $\mu = \tilde \mu_c(\eps)$. In particular,
			\[
			\tilde \mu_c(\eps) = \mu_c + O(\eps) 
			\]
			as $\eps \to 0$.
			\item (Rate-induced transition II) Fix $\sigma \in \Lambda$. If Assumption \ref{ass:heteroclinic}(II) holds and $\mathcal G_\mu(\sigma) \neq 0$, 
			then there exists a function $\hat \mu_c : (0,\eps_0) \to \R$ such that that the forward and backward extension of $\mathcal S_\eps^-$ and $\mathcal S_\eps^+$ regularly intersect when $\mu = \hat \mu_c(\eps)$. Moreover,
			\begin{equation}
				\label{eq:mu_c_asymptotics}
				\hat \mu_c(\eps) = \mu_c - \frac{\mathcal G_\eps(\mu_c, \sigma)}{\mathcal G_\mu(\sigma)} \eps + O(\eps^2) 
			\end{equation}
			as $\eps \to 0$.
			\item (Bifurcation-induced transition I) Fix $\mu \in \Theta$. If Assumption \ref{ass:heteroclinic}(I) holds and there exists an $\sigma_c > 0$ such that $\mathcal G_\eps(\mu, \sigma_c) = 0$ and $( \partial \mathcal G_\eps / \partial \sigma) (\mu, \sigma_c) \neq 0$, then there exists a smooth function $\tilde \sigma_c : (0,\eps_0) \to \R$ such that the forward and backward extension of $\mathcal S_\eps^-$ and $\mathcal S_\eps^+$ regularly intersect when $\sigma = \tilde \sigma_c(\eps)$. In particular,
			\[
			\tilde \sigma_c(\eps) = \sigma_c + O(\eps) 
			\]
			as $\eps \to 0$.
			\item (Bifurcation-induced transition II) Fix $\mu \in \Theta$. If Assumption \ref{ass:heteroclinic}(III) holds and $\mathcal G_\sigma(\mu) \neq 0$, 
			then there exists a function $\hat \sigma_c : (0,\eps_0) \to \R$ such that that the forward and backward extension of $\mathcal S_\eps^-$ and $\mathcal S_\eps^+$ regularly intersect when $\sigma = \hat \sigma_c(\eps)$. Moreover,
			\begin{equation}
				\label{eq:sigma_c_asymptotics}
				\hat \sigma_c(\eps) = \sigma_c - \frac{\mathcal G_\eps(\mu, \sigma_c)}{\mathcal G_\sigma(\mu)} \eps + O(\eps^2) 
			\end{equation}
			as $\eps \to 0$.
		\end{enumerate}
	\end{thm}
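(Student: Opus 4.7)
The plan is to combine the geometric blow-up developed in Section~\ref{sec:geometric_blow-up} with a Melnikov-style splitting argument in the spirit of \cite{Wechselberger2002}. After blow-up, the switching manifold $\Sigma$ is resolved into a cylinder on which the singular limit $\eps = 0$ becomes smooth, and the slow manifolds $\mathcal S^\pm_\eps$ provided by Lemma~\ref{lem:slow_manifolds} can be tracked as 1-dimensional locally invariant manifolds across the transitionary regime. In the central (rescaling) chart of the blow-up, the dynamics for $\eps = 0$ reduces to \eqref{eq:limit_problem}, and Assumption~\ref{ass:heteroclinic} translates into the existence of a singular heteroclinic orbit $\varphi(t)$ connecting the two hyperbolic equilibria induced by $p_\pm$. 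The theorem thus reduces to a bifurcation-from-a-singular-heteroclinic problem: show that, under variation of $\mu$ or $\sigma$, the forward extension of $\mathcal S^-_\eps$ transversally intersects the backward extension of $\mathcal S^+_\eps$ near this heteroclinic.

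To quantify the separation I would fix a section $\Sigma_0$ transverse to $\varphi(t)$ (say at $t=0$) in the central chart and define a signed splitting distance $d(\mu,\sigma,\eps)$ measuring the separation of the two extensions inside $\Sigma_0$ in the direction normal to $\varphi'(0)$. By construction $d(\mu,\sigma,0) = 0$ whenever Assumption~\ref{ass:heteroclinic} is in force. The first-order expansion of $d$ in the perturbation parameter(s) is then computed by the variation-of-constants formula along $\varphi(t)$: pairing the $\eps$-, $\mu$- and $\sigma$-derivatives of the vector field with the exponentially decaying solution $\psi_1(t)$ of the adjoint variational equation~\eqref{eq:adjoint_variational_eqn} produces exactly the Melnikov integrals $\mathcal G_\eps$, $\mathcal G_\mu$ and $\mathcal G_\sigma$ of \eqref{eq:mathcal_G_eps}--\eqref{eq:mathcal_G_sigma}. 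The extra factor $t\,\partial f/\partial s$ in $\mathcal G_\eps$ is a consequence of $s = \eps t$, so that differentiating in $\eps$ generates both an explicit $\partial f/\partial \eps$ term and an implicit $t\,\partial f/\partial s$ term evaluated along $\varphi$.

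With these partial derivatives in hand the four cases reduce to routine applications of the implicit function theorem. In case~(1), Assumption~\ref{ass:heteroclinic}(I) provides $d(\mu,\sigma,0) \equiv 0$, so $d(\mu,\sigma,\eps) = \eps \widetilde{d}(\mu,\sigma,\eps)$ with $\widetilde{d}(\mu,\sigma,0) = \mathcal G_\eps(\mu,\sigma)$; the hypotheses $\mathcal G_\eps(\mu_c,\sigma) = 0$ and $(\partial_\mu \mathcal G_\eps)(\mu_c,\sigma) \neq 0$ yield a smooth branch $\tilde\mu_c(\eps) = \mu_c + O(\eps)$ of intersection points. Case~(3) is analogous in $\sigma$. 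In case~(2), I would Taylor expand
\[
d(\mu,\sigma,\eps) = \eps\bigl[\mathcal G_\eps(\mu_c,\sigma) + \mathcal G_\mu(\sigma)(\mu - \mu_c) + O\bigl((\mu-\mu_c)^2\bigr)\bigr] + O(\eps^2),
\]
using $d(\mu_c,\sigma,0) = 0$ from Assumption~\ref{ass:heteroclinic}(II); the non-degeneracy $\mathcal G_\mu(\sigma) \neq 0$ then produces $\hat\mu_c(\eps) = \mu_c - \mathcal G_\eps(\mu_c,\sigma)/\mathcal G_\mu(\sigma)\,\eps + O(\eps^2)$, as stated in \eqref{eq:mu_c_asymptotics}. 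Case~(4) follows identically in the $\sigma$ variable.

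I expect the main obstacle to be the control of $\mathcal S^\pm_\eps$ across the transitionary regime on a non-compact time domain: since $\varphi(t)$ exists for all $t \in \R$, the Melnikov integrals are improper, and their convergence along with the exponential smallness of the error terms hinges both on the spectral gap at $p_\pm$ and on the algebraic asymptotics \eqref{eq:gamma_asymptotics} of $\gamma$. The geometric blow-up compactifies the $t \to \pm\infty$ ends of the rescaling chart, and one must carefully verify that the forward extension of $\mathcal S^-_\eps$ and the backward extension of $\mathcal S^+_\eps$ match onto the appropriate center-stable and center-unstable manifolds of the asymptotic equilibria in the entry and exit charts. Establishing this matching with sufficient regularity in $(\mu,\sigma,\eps)$ up to $\eps = 0$ --- so that $d$ is smooth and the non-compact Melnikov formalism of \cite{Wechselberger2002} applies rigorously --- is the real technical heart of the argument; once this is in place, the four assertions of the theorem follow by the implicit function theorem as sketched above.
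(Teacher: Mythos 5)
Your strategy --- geometric blow-up to resolve $\Sigma$, identification of the singular heteroclinic $\varphi(t)$ on the blow-up cylinder, a signed splitting distance $d$ along a transversal section, Melnikov integrals from pairing the parameter derivatives of the vector field with the exponentially decaying adjoint solution $\psi_1$ in the sense of \cite{Wechselberger2002}, and the implicit function theorem --- is exactly the route the paper takes, and your handling of cases (1) and (3), including the factorization $d = \eps\,\widetilde d$ and the matching to center manifolds in the entry/exit charts, is correct.

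However, the Taylor expansion you give for case (2) is wrong and, as written, does not produce the stated asymptotics. You factor the entire first-order piece as $d(\mu,\sigma,\eps) = \eps\bigl[\mathcal G_\eps(\mu_c,\sigma) + \mathcal G_\mu(\sigma)(\mu - \mu_c) + O\bigl((\mu-\mu_c)^2\bigr)\bigr] + O(\eps^2)$, which asserts $(\partial d/\partial\mu)(\mu_c,\sigma,0) = 0$. But under Assumption~\ref{ass:heteroclinic}(II) the heteroclinic exists \emph{only} at $\mu = \mu_c$: for $\mu \neq \mu_c$ the forward and backward extensions already fail to coincide at $\eps = 0$, so $d(\mu,\sigma,0) \not\equiv 0$ and one cannot pull $\eps$ out of the $(\mu-\mu_c)$-dependent terms. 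The correct leading-order expansion --- and the one the paper uses --- is
\[
d(\mu,\sigma,\eps) = \mathcal G_\mu(\sigma)\,(\mu - \mu_c) + \mathcal G_\eps(\mu_c,\sigma)\,\eps + O\bigl(|(\mu - \mu_c,\,\eps)|^2\bigr),
\]
so that $\mathcal G_\mu$ is the $\mu$-derivative of the splitting distance at $\eps = 0$, not a coefficient at order $\eps$. The implicit function theorem with $\mathcal G_\mu(\sigma) \neq 0$ then yields $\hat\mu_c(\eps) = \mu_c - (\mathcal G_\eps(\mu_c,\sigma)/\mathcal G_\mu(\sigma))\,\eps + O(\eps^2)$, as in \eqref{eq:mu_c_asymptotics}. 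With your expansion, setting the bracket to zero instead gives $\mu - \mu_c = -\mathcal G_\eps/\mathcal G_\mu + O(\eps)$, an $O(1)$ displacement rather than $O(\eps)$, which contradicts the theorem; you state the correct conclusion, but your written derivation does not support it. The same correction applies to case (4).
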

	
	The proof of Theorem \ref{thm:tipping} is given in Section \ref{sec:proof_of_thm_tipping}, however it relies on a number of results which we derive using geometric blow-up in Section \ref{sec:geometric_blow-up}. The blow-up transformation allows us to resolve the loss of smoothness along $\Sigma$ as $\eps \to 0$. 
	The manifolds $\mathcal S^\pm_\eps$ appear as center-type manifolds in the blown-up space, which can be extended in forward and backward time into the inner regime $s = O(\eps)$, where the transition occurs. The regular intersection of these manifolds under variation of $\mu$ (in the case of Assertions 1-2) or $\sigma$ (in the case of Assertions 3-4) is proven using an extension of the Melnikov method which was developed for problems on unbounded domains in \cite{Wechselberger2002}. Figure \ref{fig:heteroclinic} depicts the situation described by either Assertion 2 or 4 (depending on whether we vary $\mu$ or $\sigma$) in the blown-up space, in a situation where the outer dynamics is similar to the outer dynamics identified on the PWS level in Figure \ref{fig:PWS}. 
	The figure shows what we shall see in detail in Sections \ref{sec:geometric_blow-up} and \ref{sec:proofs}, namely, that the intersections described in Assertions 2 and 4 of Theorem \ref{thm:tipping} correspond to a heteroclinic connection between two points $p_-$ and $p_+$ which lie on the `equator' of the cylinder which is obtained after blowing up the switching manifold $\Sigma$.\footnote{\label{foot:notation}\rev{In order to simplify the exposition here, we permit an abuse of notation by conflating the points $p_\pm$ in Figure \ref{fig:PWS} with those in Figures \ref{fig:heteroclinic} and \ref{fig:blow-up}. Strictly speaking, the points $p_\pm$ which are defined in Assumption \ref{ass:normal_hyperbolicity} and shown in Figure \ref{fig:PWS} are blown-up to circles under \eqref{eq:blow-up}. These circles contain the equilibria $p_\pm$ depicted in Figures \ref{fig:heteroclinic} and Figure \ref{fig:blow-up}. This distinction is clarified in later sections by denoting the points in Figures \ref{fig:heteroclinic} and \ref{fig:blow-up} by $P_\pm$.}} The situation described in Assertions 1 and 3 is similar, except that in these cases the separation between the extended slow manifolds $\mathcal S^-_\eps$ and $\mathcal S^+_\eps$ is higher order in $\eps$. In general, Assertions 1 and 3 are expected to be useful in situations where the transition is triggered by $\mu$ or $\sigma$ variations in the leading order perturbations at $O(\eps)$.
	
	\begin{figure}[t!]
		\centering
		\includegraphics[scale=0.16]{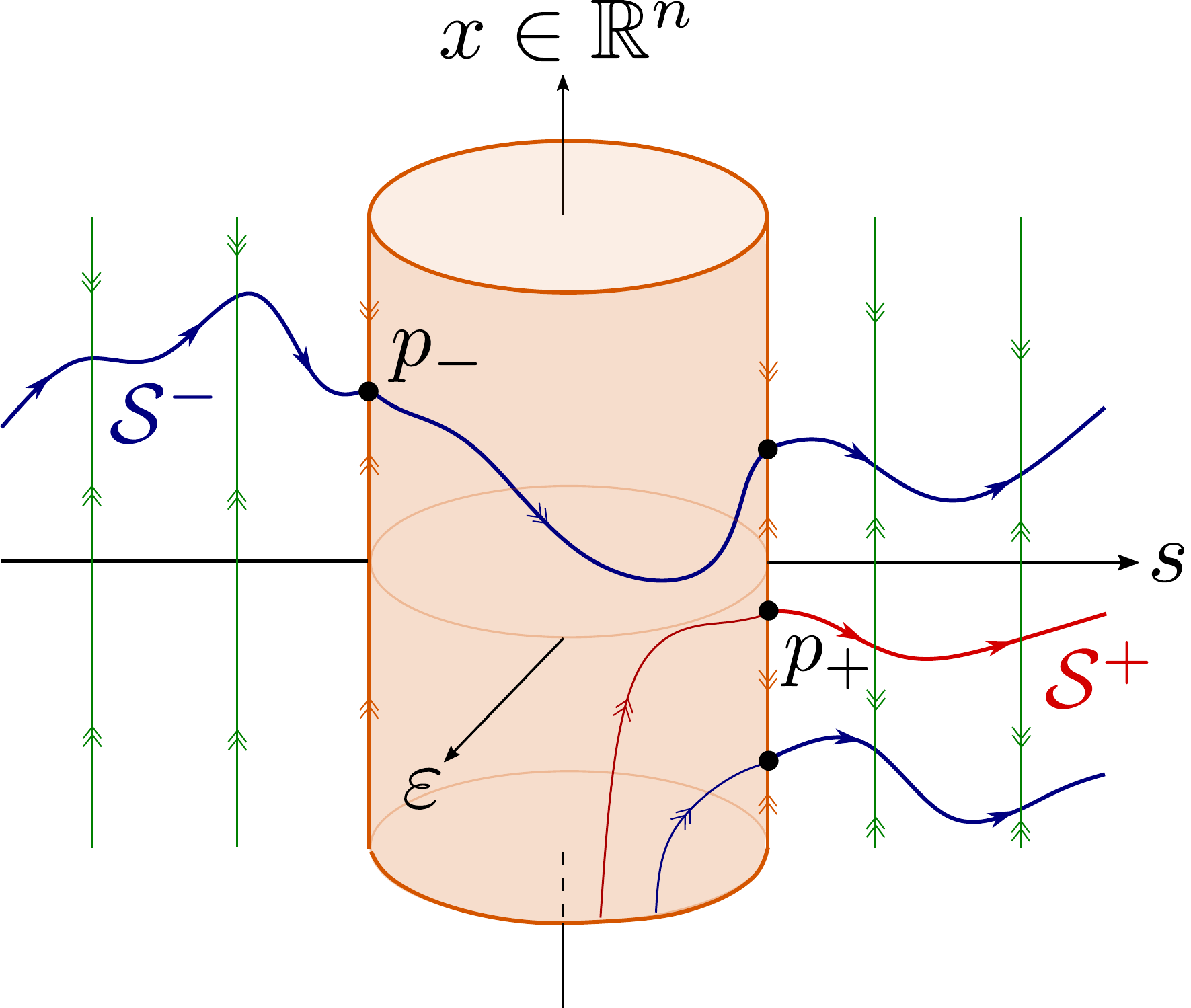}
		\ 
		\includegraphics[scale=0.16]{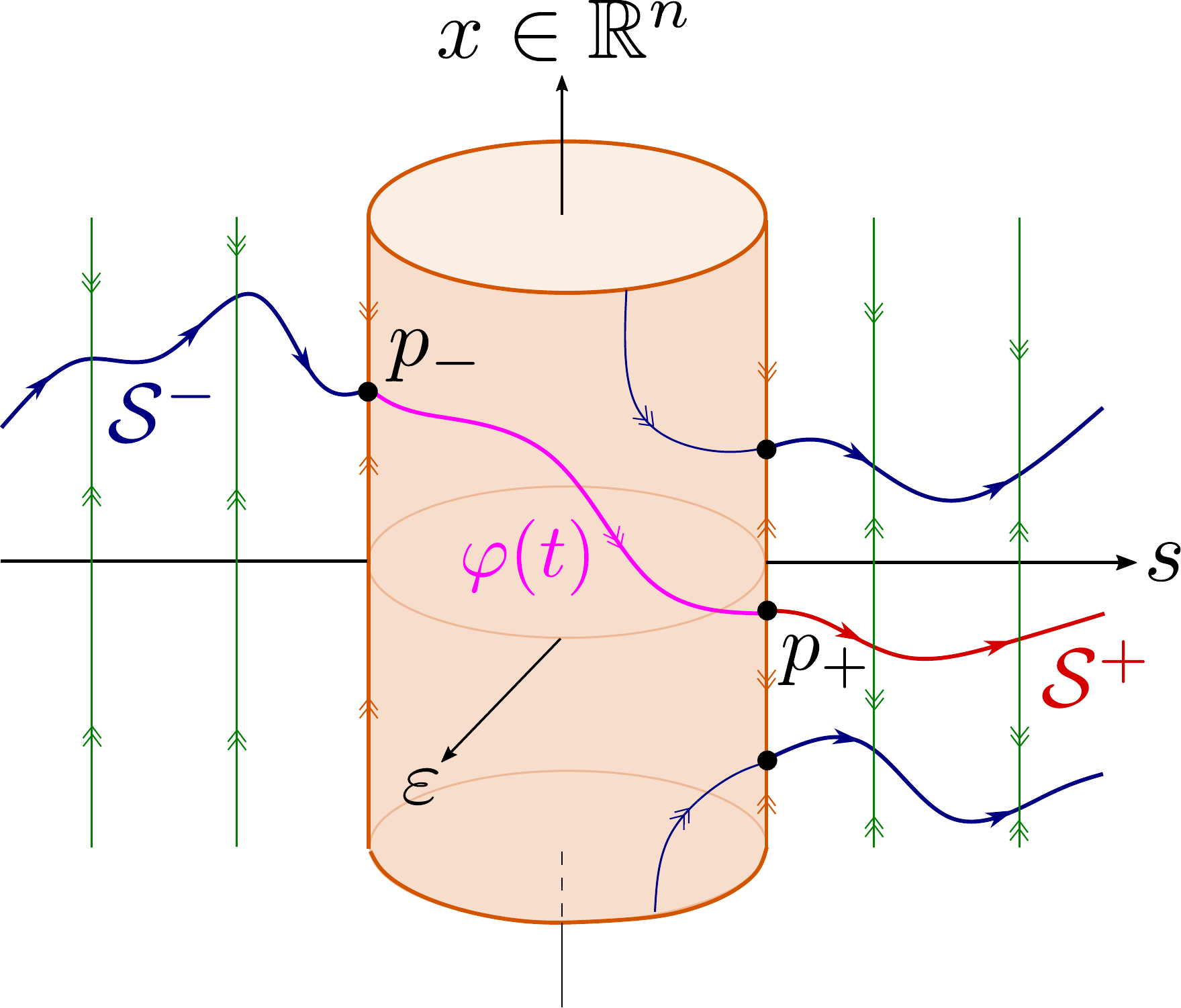}
		\ 
		\includegraphics[scale=0.16]{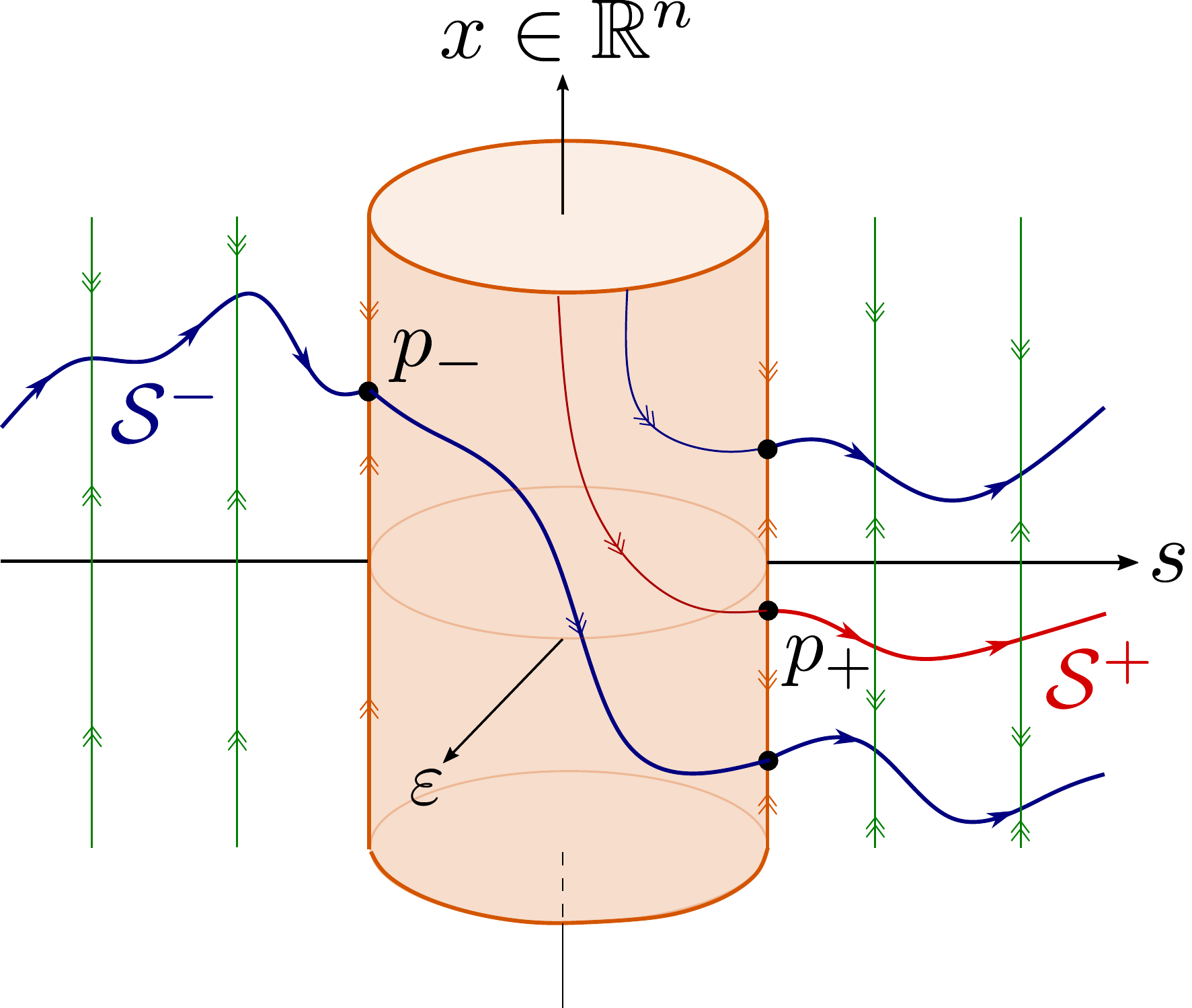}
		\caption{Schematic representation of the geometry and dynamics described by Theorem \ref{thm:tipping} Assertions 2 and 4 in the blown-up space, where the switching manifold $\Sigma$ has been `replaced' by the cylinder shown in shaded orange; see Section \ref{sec:geometric_blow-up} and in particular Figure \ref{fig:blow-up} and its caption for further details. If $\mathcal G_\mu(\sigma) < 0$ ($\mathcal G_\sigma(\mu) < 0$), then the left, center and right figures show the singular dynamics for $\mu < \mu_c$, $\mu = \mu_c$ and $\mu > \mu_c$ respectively ($\sigma < \sigma_c$, $\sigma = \sigma_c$ and $\sigma > \sigma_c$ respectively). The special solution $\varphi(t)$ in magenta occurs for $\mu = \mu_c$ or $\sigma = \sigma_c$, recall Assumption \ref{ass:heteroclinic}(II)-(III), is constrained to the blow-up cylinder, and forms a heteroclinic connection between two points $p_-$ and $p_+$ which connect to the critical manifolds $\mathcal S^-$ and $\mathcal S^+$ respectively \rev{(note that the points $p_\pm$ are closely related but not identical to the points $p_\pm$ in Figure \ref{fig:PWS}; see Footnote \ref{foot:notation})}. This heteroclinic connection breaks regularly under variation of $\mu$ if Theorem \ref{thm:tipping} Assertion 2 applies, or $\sigma$ if Theorem \ref{thm:tipping} Assertion 4 applies. Theorem \ref{thm:tipping} states that a corresponding canard-like intersection of the slow manifolds $\mathcal S^-_\eps$ and $\mathcal S^+_\eps$, which lie above the blow-up surface, occurs for all $\eps > 0$ sufficiently small.}
		\label{fig:heteroclinic}
	\end{figure}
	
	Theorem \ref{thm:tipping} applies in arbitrary dimensions, but in general one can only expect that the intersections described by Theorem \ref{thm:tipping} correspond to rate and bifurcation induced transitions in the case of scalar equations \eqref{eq:main}. In this case, the intersections are codimension-1, and therefore form a separatrix which breaks under variation in $\mu$ (in the case of Assertions 1-2) or $\sigma$ (in the case of Assertions 3-4). From a practical point of view, Theorem \ref{thm:tipping} also provides a means of \textit{determining} the critical rate $\mu_c$ or parameter $\sigma_c$, at least asymptotically as $\eps \to 0$. Specifically, $\sigma_c$ and $\mu_c$ can \rev{(in the case of Assertions 1 and 3) be determined by finding the roots of the equation $\mathcal G_\eps(\mu,\sigma) = 0$, or} (in the case of Assertions 2 and 4) by showing that Assumption \ref{ass:heteroclinic} is satisfied, \rev{i.e.~by verifying the existence of the special solution $\varphi(t)$ to the limiting problem \eqref{eq:limit_problem}}. The fact that the critical rates can be calculated is significant, given that the quantitative approximation of critical parameters in general non-autonomous systems i.e.~beyond the class of asymptotically autonomous systems \eqref{eq:asymptotically_autonomous_system}, tends to be \rev{a very difficult problem in practice}. 
	
	\begin{remark}
		The intersections which mediate the rate and bifurcation induced transitions described by Theorem \ref{thm:tipping} (when applied to scalar systems \eqref{eq:main}) imply the existence of a novel kind of \textit{canard solution} for asymptotically slow-fast systems, since it corresponds to the intersection of attracting and repelling slow manifolds which have been extended in forward and backward time respectively. We refer again to \cite{DeMaesschalck2021,Dumortier1996,Krupa2001a,Krupa2001b,Szmolyan2001,Wechselberger2012} for background and geometric analyses of canards in classical slow-fast systems which share a lot of qualitative similarities with the canard solutions described by Theorem \ref{thm:tipping}. \rev{The similarity to canards also leads naturally to the question of how one should define a corresponding way-in/way-out function. Since we are only interested in the identification of canard-like intersections in this work, this is left for future work.}
	\end{remark}
	
	\begin{remark}
		In principle, the tools that we use to prove Theorem \ref{thm:tipping}, e.g.~geometric blow-up, center manifold theory, perturbation theory and Melnikov theory for non-compact domains, also apply in arbitrary dimensions. We therefore expect that similar methods can be used to \rev{generalise Theorem \ref{thm:tipping} so that it can be used to identify rate and bifurcation induced transitions in higher dimensions}. The key requirement is that the slow manifolds $\mathcal S_\eps^\pm$ are codimension-1, i.e.~$n$-dimensional. This may occur in higher dimensions if, for example, the fast and future systems are slow-fast in the $x$ variables with only one fast direction; we refer again to Remark \ref{rem:S_dimensions}.
	\end{remark}

	\subsection{Sufficient conditions for normally hyperbolic tracking}
	
	In addition to the results on bifurcation and rate induced transitions described in Theorem \ref{thm:tipping}, we also present sufficient conditions 
	under which a normally hyperbolic and attracting slow manifold may preserve its smoothness, invariance and uniform attractivity properties as it extends through a neighbourhood of $\Sigma$. 
	\rev{Bifurcation and} rate induced transitions \rev{of the kind described by Theorem \ref{thm:tipping} are not possible in this case. We shall refer to this situation as \textit{normally hyperbolic tracking}, in order to distinguish it from notions of bifurcation or rate induced tracking which involve a qualitative change under the variation of a bifurcation or rate parameter (see e.g.~\cite{Duenas2023c}).} Since we are no longer interested in \rev{transitions}, we fix the system parameters $\mu$ and $\sigma$ write system \eqref{eq:main_lifted} more succinctly as
	\begin{equation}
		\label{eq:main_no_pars}
		\begin{split}
			x' &= f(x, \gamma(\mu s / \eps), s, \eps) , \\
			s' &= \eps .
		\end{split}
	\end{equation}
	
	
	%
	
	Let $\mathcal X \in \R^n$ denote an open, bounded and convex set such that the set $\mathcal X \times [-\rho, \rho]$ contains 
	the two critical manifolds $\mathcal S^\pm$. 
	We shall also write $\rev{\mathcal A} := \mathcal X \times [-\rho, \rho] \times (0,\eps_0)$. Our main result for this section will apply to systems of the form \eqref{eq:main_no_pars} which \rev{satisfy} the following assumption instead of Assumption \ref{ass:heteroclinic}.
	
	\begin{assumption}
		\label{ass:tracking}
		The set $\mathcal X$ and constants $\rho, \eps_0 > 0$ can be chosen such that the following properties hold:
		\begin{enumerate}
			\item[(i)] System \eqref{eq:main_no_pars} is inflowing with respect to $\mathcal X$, i.e.~the boundary $\partial \mathcal X$ is piecewise $C^1$ with an outward pointing normal $n(x)$ such that $n(x) \cdot f(x, \gamma(\mu s / \eps), s, \eps) < 0$ for all $(x,s,\eps) \in \rev{\mathcal A}$.
			\item[(ii)] There exists a constant $l_{22} < 0$ such the logarithmic norm inequality
			\begin{equation}
				\label{eq:log_norm}
				l_{22} := \sup_{(x,s,\eps) \in \rev{\mathcal A}} \left[ \lim_{h \to 0} \frac{\| \mathbb I_n + h D_x f \| - 1}{h} \right] < 0 ,
			\end{equation}
			holds, where $\mathbb I_n$ denotes the $n \times n$ identity matrix.
		\end{enumerate}
	\end{assumption}
	
	The properties in Assumption \ref{ass:tracking} have direct counterparts to the usual inflowing and normal contractivity assumptions which can be found in general normally \rev{hyperbolic} invariant manifold theory, see e.g.~\cite{Fenichel1971,Hirsch1970,Nipp2013,Wiggins2013}. In fact, both (i) and (ii) can be viewed as direct adaptions of some (but not all) of the basic assumptions that are used to prove the existence of attracting invariant manifolds in \cite{Nipp2013}; see in particular Hypotheses HD(a) and HD(c)(ii) therein. It is notable, however, that the list of assumptions in Assumption \ref{ass:tracking} is relatively short in comparison to the usual list of assumptions that one has to check in the (considerably more general) setting which is covered by these references.
	
	In order to state our main result, we let $I := [-\eps M, \eps M] \subset \R$, $\mathcal I := [-M, M] \subset \R$ and $s = \eps \hat s$, where $M \geq \rho^{-1}$ is an arbitrarily large but fixed constant, and define the constants
	\begin{equation}
		\label{eq:l_ij}
		l_{21} := \sup_{(x,s,\eps) \in \rev{\mathcal A}} \| D_x f \| , \qquad 
		l_{23} := \sup_{(x,s,\eps) \in \rev{\mathcal A}} \| D_\eps f \| ,
	\end{equation}
	where we have chosen a notation consistent with \cite{Nipp2013}, since the proof shall rely on results therein. \rev{Notice in particular that $\hat s = t$, due to \eqref{eq:s}.}
	
	We now state \rev{the} main result \rev{of this section}.
	
	\begin{thm}
		\label{thm:normal_hyperbolicity}
		Consider system \eqref{eq:main_no_pars} under Assumptions \ref{ass:normal_hyperbolicity} and \ref{ass:tracking}. There exists an $\eps_0 > 0$ and a $C^k$ function $\omega : [- \rho, \rho ] \times (0,\eps_0) \to \R^n$ such that the following assertions are true:
		\begin{enumerate}
			\item The 1-dimensional manifold defined by
			\begin{equation}
				\label{eq:mathcal_W}
				\mathcal W := \left\{ (\omega(s,\eps), s) : s \in [-\rho, \rho] \right\}
			\end{equation}
			is locally invariant under the flow of system \eqref{eq:main_no_pars}. Moreover, the restricted function $\omega_I := \omega|_{s \in I}$ satisfies the invariance equation
			\begin{equation}
				\label{eq:omega_I_invariance}
				\frac{\partial \omega_I}{\partial \hat s} (\hat s, \eps) = 
				f \left( \omega_I(\hat s, \eps), \gamma(\mu \hat s), \eps \hat s, \eps \right) ,
			\end{equation}
			for all $\hat s \in \mathcal I$ and $\eps \in (0, \eps_0)$.
			\item $\omega_I$ is Lipschitz in both $\hat s$ and $\eps$. More precisely, we have
			\[
			| \omega_I(\hat s, \eps) - \omega_I(\tilde{\hat s}, \eps) | \leq \frac{l_{21}}{|l_{22}|} , \qquad 
			| \omega_I(\hat s, \eps) - \omega_I(\hat s, \tilde \eps) | \leq \frac{l_{23}}{|l_{22}|} ,
			\]
			for all $\hat s, \tilde{\hat s} \in \mathcal I$ and $\eps, \tilde \eps \in (0,\eps_0)$.
			\item $\mathcal W$ is uniformly attracting, i.e.~all solutions $(x(t), s(t))$ with $(x(0), s(0)) \in \mathcal X \times (-\rho, \rho)$ satisfy
			\[
			\left| x(t) - \omega(s(t), \eps) \right| \leq 
			\e^{l_{22} t} \left| x(0) - \omega(s(0), \eps) \right| ,
			\]
			for all $t \geq 0$ such that $s(t) = \eps t + s(0) \in (-\rho, \rho)$.
			\item If $\mathcal S^\pm$ are normally hyperbolic and attracting for all $|s| \in (0,\rho]$, then for each fixed $c \in (0,\rho)$, $\mathcal W$ is $O(\eps)$ close to $\mathcal S^-$ ($\mathcal S^+$) on $s \in [-\rho, -c]$ ($s \in [c,\rho])$. More precisely, the restricted functions $\omega_O^- := \omega|_{s \in [-\rho,-c]}$ and $\omega_O^+ := \omega|_{s \in [c, \rho]}$ define slow manifolds with asymptotics
			\[
			\omega_O^\pm(s,\eps) = h^\pm(s) + O(\eps)
			\]
			as $\eps \to 0$\rev{, where $h^\pm$ are the (generally parameter-dependent) functions which appear in \eqref{eq:mathcal_S}}.
		\end{enumerate}
	\end{thm}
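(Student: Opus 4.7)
The plan is to construct $\mathcal W$ as a single attracting invariant manifold for system \eqref{eq:main_no_pars} by applying the general invariant manifold theorem of Nipp-Stoffer \cite{Nipp2013} in the fast time coordinate $\hat s = s/\eps = t$. On this time scale the system reads
\[
\frac{dx}{d\hat s} = f\!\left(x, \gamma(\mu \hat s), \eps \hat s, \eps\right), \qquad \frac{d\hat s}{d\hat s} = 1,
\]
on the tangential domain $\hat s \in [-\rho/\eps, \rho/\eps]$, with $x$ playing the role of the normal coordinate and $\hat s$ the tangential coordinate. First I would verify the two relevant Nipp-Stoffer hypotheses. Assumption \ref{ass:tracking}(i) gives inflowing with respect to $\mathcal X$ uniformly in $(\hat s, \eps)$, which is hypothesis HD(a) of \cite{Nipp2013}. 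Assumption \ref{ass:tracking}(ii), via the logarithmic norm bound $l_{22} < 0$, gives uniform normal contraction, which is hypothesis HD(c)(ii). Applying the invariant manifold theorem then produces a $C^k$ graph function $\omega : [-\rho, \rho] \times (0,\eps_0) \to \R^n$ whose graph $\mathcal W$ (as in \eqref{eq:mathcal_W}) is locally invariant under \eqref{eq:main_no_pars}. Restricting to $\hat s \in \mathcal I$ yields precisely the invariance PDE \eqref{eq:omega_I_invariance}, which proves Assertion 1.

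Assertions 2 and 3 follow as quantitative consequences of the same construction. The two Lipschitz bounds $l_{21}/|l_{22}|$ and $l_{23}/|l_{22}|$ in Assertion 2 are exactly the Nipp-Stoffer Lipschitz constants coming from the fixed-point construction of $\omega$ as a contraction with rate $|l_{22}|$ and source terms bounded by $l_{21}$ (tangential coupling) and $l_{23}$ (parameter dependence), so these are directly inherited from the quantitative part of \cite{Nipp2013}. For Assertion 3, I would give a direct Gronwall-type argument using \eqref{eq:log_norm}: setting $y(t) := x(t) - \omega(s(t),\eps)$ and differentiating along trajectories, the invariance of $\omega$ gives $y'(t) = f(x(t), \cdots, \eps) - f(\omega(s(t),\eps), \cdots, \eps)$, whence the logarithmic norm inequality produces the differential inequality $\tfrac{d}{dt}|y(t)| \le l_{22}\,|y(t)|$, which integrates to the claimed exponential estimate for as long as $s(t) \in (-\rho, \rho)$.

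For Assertion 4, I would localize to $s \in [-\rho, -c]$ (resp.\ $s \in [c,\rho]$), which is bounded away from $\Sigma$. On this set the system \eqref{eq:main_no_pars} is a smooth slow-fast system, so the normal hyperbolicity and attractivity hypothesis on $\mathcal S^\pm$ together with classical Fenichel theory produces an attracting $O(\eps)$-close slow manifold of the form $h^\pm(s) + O(\eps)$. Uniqueness of attracting normally hyperbolic invariant manifolds inside $\mathcal X$, which follows from the uniform contraction in Assertion 3, forces this Fenichel manifold to coincide with $\omega_O^\pm$, yielding the desired expansion. The main technical obstacle is the uniform verification of the Nipp-Stoffer hypotheses over the $\eps$-dependent tangential domain $[-\rho/\eps, \rho/\eps]$ and, in particular, the uniform control of the non-autonomous dependence through $\gamma(\mu \hat s)$ and $\eps \hat s$. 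This is however handled by the fact that the estimates in Assumption \ref{ass:tracking} and the constants in \eqref{eq:l_ij} are defined as suprema over the full set $\mathcal A = \mathcal X \times [-\rho, \rho] \times (0,\eps_0)$, and are thus automatically uniform in the tangential coordinate $\hat s$.
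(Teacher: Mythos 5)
Your overall plan is in the right spirit — the paper does indeed apply the Nipp–Stoffer theory on the fast time scale and checks Hypotheses HD(a) and HD(c)(ii) via Assumption \ref{ass:tracking}, and the Gronwall argument you sketch for Assertion 3 and the Fenichel-plus-uniqueness argument for Assertion 4 are both consistent with the paper's reasoning. However, there is a genuine gap: you have omitted the Nipp–Stoffer hypothesis on the \emph{tangential} direction. Besides inflowing in $\mathcal X$ and normal contractivity, \cite{Nipp2013} requires the dynamics to be \emph{outflowing} with respect to the tangential domain (Hypothesis HD(b)). Since $d\hat s/d\hat s = 1 > 0$ everywhere, the system is inflowing, not outflowing, at the left boundary $\hat s = -M$ (or $\hat s = -\rho/\eps$ in your set-up), so the theorem cannot be applied directly — neither on your $\eps$-dependent domain nor on any bounded interval. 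The paper resolves this by constructing an auxiliary system $s' = 1 - \hat\delta\,\nu(s)$ with a one-sided smooth cutoff $\nu$ supported near the left boundary, chosen so that the modified $s$-dynamics is outflowing at both ends of a slightly enlarged interval while coinciding with the original system on the interval of interest. It then checks that this modification does not spoil the logarithmic-norm estimates (since $\nu' \le 0$), applies the theorem to the auxiliary system to get $\omega_I$, and only then recovers the invariance equation \eqref{eq:omega_I_invariance} on $\mathcal I$, where $\nu \equiv 0$.

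A second, smaller difference: you propose to apply Nipp–Stoffer on the full fast-time domain $[-\rho/\eps, \rho/\eps]$ all at once, whereas the paper applies it only on a bounded (in $\hat s$) inner domain and constructs $\omega$ on $[-\rho,\rho]$ by patching together the center manifold graphs $h_1^-$, $h_3^+$ obtained after blow-up (Lemmas \ref{lem:K1_cm} and \ref{lem:K3_cm}) with the Nipp–Stoffer output $\omega_I$, matching Taylor expansions at the overlap. This patching is not merely cosmetic: the blow-up charts are what give you uniform control and smoothness up to and including $s = 0$ in the singular limit, and they also organize the proof of Assertion 4. Even if you could run Nipp–Stoffer on the $\eps$-dependent domain, you would still owe an argument that the resulting graph is $C^k$ uniformly down to $\eps \to 0$, which is precisely what the blow-up construction supplies for free.
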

	
	Theorem \ref{thm:normal_hyperbolicity} is proven in Section \ref{sec:proof_of_thm_normal_hyperbolicity}, and uses Fenichel theory \cite{Fenichel1979,Jones1995,Kuehn2015,Wechselberger2019}, geometric blow-up, center manifold theory and the general invariant manifold theory developed in \cite{Nipp2013}. Roughly speaking, Fenichel theory covers the outer regimes and Assertion 4, while the inner regime and Assertions 1-3 can be treated using a standard cutoff procedure together with the results in \cite{Nipp2013}. The `boundaries' between the inner and outer regimes are described via center manifold theory after blow-up.
	
	\begin{remark}
		\label{rem:optimality}
		The choice of the neighbourhood $\mathcal X$ will in general effect the quantitative predictions made by Theorem \ref{thm:normal_hyperbolicity}, since this choice effects the optimality of the bounds $l_{21}$, $l_{22}$ and $l_{23}$ defined in equations \eqref{eq:log_norm} and \eqref{eq:l_ij}. We also note that the requirement that $\mathcal S^\pm \subset \mathcal X \times [-\rho, \rho]$ is made for simplicity; we are confident that a counterpart to Theorem \ref{thm:normal_hyperbolicity} could be proven in the case that $\mathcal X \times [-\rho, \rho]$ is replaced by a tubular neighbourhood which contains both $\mathcal S^\pm$ and satisfies the relevant inflowing conditions in Assumption \ref{ass:tracking}. This should be possible using standard techniques, namely (i) proving a local version of Theorem \ref{thm:normal_hyperbolicity}, and (ii) extending this to compact domains using a partition of unity.
	\end{remark}

	\section{Geometric blow-up}
	\label{sec:geometric_blow-up}
	
	In this section we present the geometric blow-up analysis. This will be necessary to prove Theorems \ref{thm:tipping} and \ref{thm:normal_hyperbolicity}, but it should also be viewed as an essential part of the general mathematical formalism for the analysis of asymptotically slow-fast systems \eqref{eq:asymptotically_sf_systems} developed herein. In essence, the blow-up will allow us to transform the problem in such a way that all three regimes (left-outer, inner, right-outer) can be viewed in a single `blown-up space' which has improved smoothness and hyperbolicity properties, albeit with a more complicated geometry.
	
	Recall that system \eqref{eq:main_lifted} suffers from a degeneracy in the sense that there is a loss of smoothness along $\Sigma$ as $\eps \to 0$. We shall resolve this using recent adaptations of the geometric blow-up method for regularised PWS systems; we refer again to \cite{Huzak2023,Jelbart2020d,Jelbart2021,Kristiansen2019c,Kristiansen2015a,Kristiansen2019d}. As is standard in blow-up approaches, we consider $\eps$ as a variable and work with the extended system obtained from \eqref{eq:main_lifted} after appending the trivial equation $\eps'=0$:
	\begin{equation}
		\label{eq:main_extended}
		\begin{split}
			x' &= f\left( x, \gamma\left(\mu s / \eps \right), s, \sigma, \eps \right) , \\
			s' &= \eps , \\
			\eps' &= 0 .
		\end{split}
	\end{equation}
	This system is degenerate along $\Sigma \times \{\eps = 0\} \subset \R^{n+2}$. The loss of smoothness will be resolved after applying the blow-up transformation
	\begin{equation}
		\label{eq:blow-up}
		r \geq 0 , \ \left(\bar s, \bar \eps \right) \in S^1 \mapsto 
		\begin{cases}
			s = r \bar s , \\
			\eps = r \bar \eps .
		\end{cases}
	\end{equation}
	Note that the variables $x \in \R^n$ are not effected by the transformation. Global coordinates in the `blown-up space' are given by $(x,\bar s,\bar \eps, r) \in \R^n \times S^1 \times \R_+$, and the degenerate set $\Sigma \times \{\eps = 0\}$ is replaced by the `cylinder' $\R^n \times S^1 \times \{0\}$; see Figure \ref{fig:blow-up}.
	
	\begin{figure}[t!]
		\centering
		\includegraphics[scale=0.25]{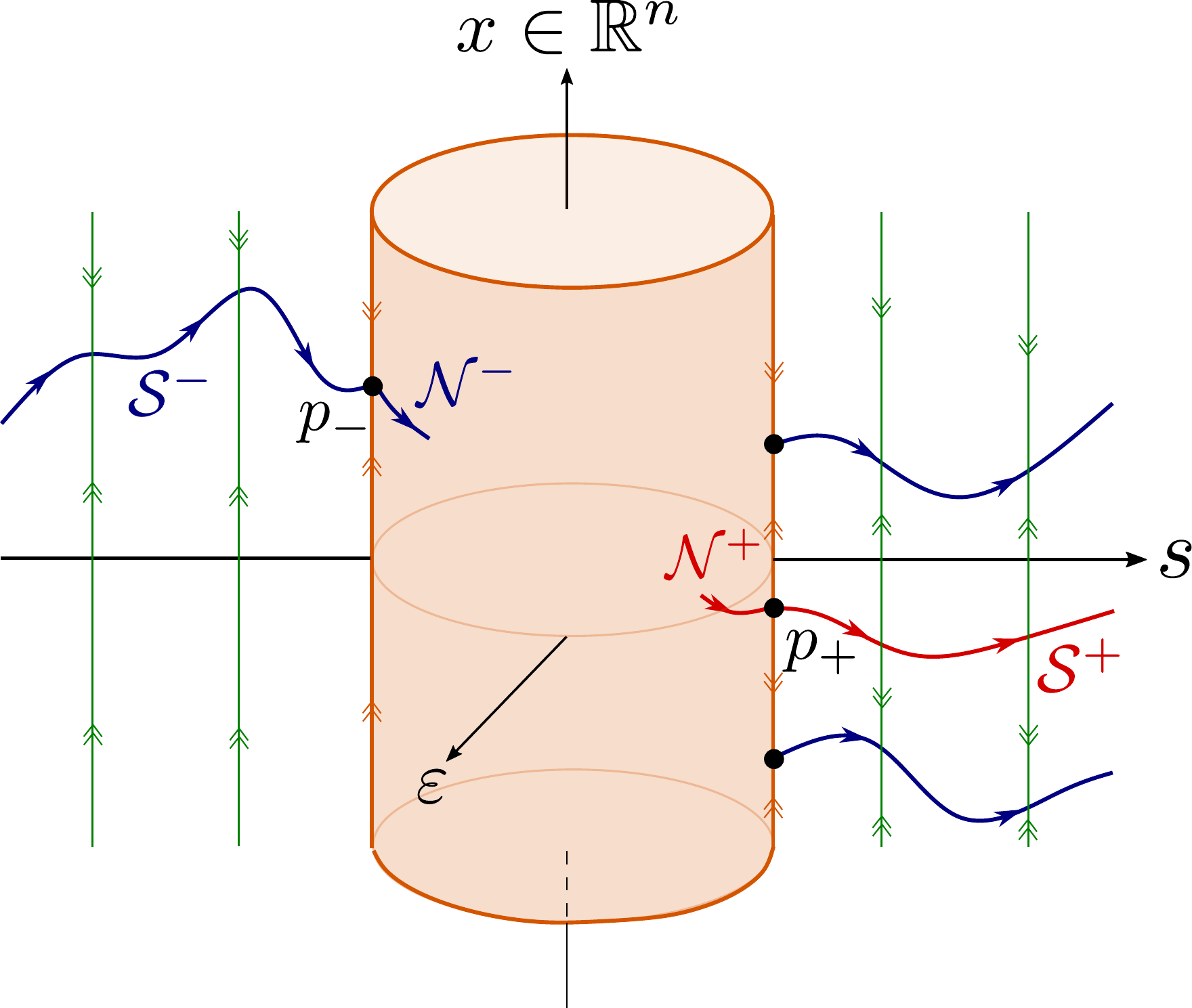}
		\caption{Possible geometry and dynamics in the blown-up space. We sketch the limiting dynamics for the associated left- and right-outer regimes (cf.~Figure \ref{fig:PWS}), which appear within the invariant regions defined by $\bar s = -1$, $\bar \eps = 0$, $r > 0$ and $\bar s = 1$, $\bar \eps = 0$ and $r > 0$ respectively. The switching manifold $\Sigma$ has been `blown-up' to the `cylinder' $\R^n \times S^1 \times \{r = 0\} \cong \R^n \times S^1$, which is shown in shaded orange. The limiting system \eqref{eq:limit_problem} in the inner transitionary regime induces dynamics within $r = 0$, i.e.~on the cylinder itself. In particular, solutions on the blow-up cylinder that are bounded in $x$ converge to the left equator with $\bar s = -1$, $\bar \eps = 0$ and $r = 0$ as $t \to -\infty$, and to the right equator with $\bar s = 1$, $\bar \eps = 0$ and $r = 0$ as $t \to \infty$. We also indicate the points $p_\pm$ \rev{(which we denote by $P_\pm$ in Sections \ref{sec:K1} and \ref{sec:K3} below; recall the comments in Footnote \ref{foot:notation})} and the local center manifolds $\mathcal N^\pm$ identified in the $K_1$ and $K_3$ analyses in Sections \ref{sec:K1} and \ref{sec:K3}.}
		\label{fig:blow-up}
	\end{figure}				
	
	For calculations, we shall work in three local coordinate charts, which can be defined via projective coordinates onto the (hyper)planes defined by
	\[
	K_1 : \ \bar s = -1 , \qquad K_2 : \ \bar \eps = 1 , \qquad K_3 : \ \bar s = 1 .
	\]
	Here and in what follows we adopt the (by now quite standard) notational conventions from \cite{Krupa2001a,Krupa2001c,Krupa2001b}. Local coordinates in $K_1$, $K_2$ and $K_3$ can be written as
	\begin{equation*}
		\begin{aligned}
			K_1 &: \ x = x , && s = - r_1 , && \eps = r_1 \eps_1 , \\
			K_2 &: \ x = x , && \rev{s = r_2 s_2} , && \eps = r_2 , \\
			K_3 &: \ x = x , && s = r_3 , && \eps = r_3 \eps_3 .
		\end{aligned}
	\end{equation*}
	Where their domains overlap, there is a smooth invertible change of coordinates between charts $K_1$ and $K_2$, and charts $K_2$ and $K_3$. These are given by
	\begin{equation}
		\label{eq:kappa_ij}
		\begin{aligned}
			\kappa_{12} &: \ r_1 = - r_2 s_2 , && \eps_1 = - s_2^{-1} , && s_2 < 0	 , \\
			\kappa_{23} &: \ r_2 = r_3 \eps_3 , && s_2 = \eps_3^{-1} , && \eps_3 > 0 .
		\end{aligned}
	\end{equation}
	Finally, we note that since $\eps$ is constant, the expressions $\eps = r_1 \eps_1$, $\eps = r_2$ and $\eps = r_3 \eps_3$ define invariant level sets in charts $K_1$, $K_2$ and $K_3$ respectively.
	
	\begin{remark}
		\label{rem:blow-up_regimes}
		The limiting problems associated with all three regimes, i.e.~the left-outer, inner and right-outer regimes, appear in different invariant regions of the blown-up space. The left-outer limiting problem corresponds to dynamics within the set defined by $\bar s = -1, \bar \eps = 0$ and $r > 0$, the inner limiting problem corresponds to dynamics on the blow-up cylinder itself, where $r = 0$, and the right-outer limiting problem corresponds to dynamics within the set defined by $\bar s = 1, \bar \eps = 0$ and $r > 0$. This is sketched in Figure \ref{fig:blow-up}.
	\end{remark}
	
	We turn now to the dynamics in the so-called \textit{entry} and \textit{exit} charts $K_1$ and $K_3$, respectively, deferring the details of the dynamics in the inner rescaling chart $K_2$ to Section \ref{sec:proofs}.
	
	
	

	\subsection{Dynamics in chart $K_1$}
	\label{sec:K1}
	
	Writing system \eqref{eq:main_extended} in local $K_1$ coordinates, we obtain
	\begin{equation}
		\label{eq:K1_eqns}
		\begin{split}
			x' &= f_-(x,-r_1,\sigma) + O(r_1 \eps_1, \eps_1^l) , \\
			r_1' &= -r_1 \eps_1 , \\
			\eps_1' &= \eps_1^2 ,
		\end{split}
	\end{equation}
	where 
	the equation for $x'$ was obtained via Taylor expansion using \eqref{eq:gamma_asymptotics}. We are primarily interested in the dynamics in a neighbourhood defined by $\eps_1 \in [0,\delta]$ and $r_1 \in [0,\rho]$, for arbitrarily small but fixed $\delta, \rho > 0$. 
	
	\begin{remark}
		Typically, blow-up analyses involve a non-trivial transformation of time which amounts to the formal division or multiplication of the right-hand side in \eqref{eq:K1_eqns} by a certain power of $r_1$ (division in the case of blow-up at non-hyperbolic points or manifolds, and multiplication in the case of blow-up at non-smooth points or manifolds). Usually, this is necessary in order to obtain a well-defined and non-trivial vector field on the blow-up surface, i.e.~for $r_1 = 0$. Interestingly, for asymptotically slow-fast systems of the form \eqref{eq:main_lifted}, the desingularisation step is not necessary in order to obtain the `already desingularised' equations \eqref{eq:K1_eqns}. The analogous feature also holds for the $K_2$ and $K_3$ analyses below.
	\end{remark}
	
	System \eqref{eq:K1_eqns} has a 1-dimensional critical manifold containing the branch
	\begin{equation}
		\label{eq:S1_minus}
		\mathcal S^-_1 := \left\{ (h_0^-(-r_1, \sigma),r_1,0) : r_1 \in [0,\rho] \right\} ,
	\end{equation}
	which corresponds to the blown-up preimage of $\mathcal S^-$ for $r_1 > 0$, now extended up to $r_1 = 0$, i.e.~up to the point $P_- : (x_-(\sigma),0,0)$ (\rev{which we denote by $p_-$ in Figures \ref{fig:heteroclinic} and \ref{fig:blow-up}), where we r}ecall that $h^-_0(0,\sigma) = x_-(\sigma)$. We are interested in the geometry and dynamics near $P_-$.
	
	\begin{lemma}
		\label{lem:K1_cm}
		Fix $\rho, \delta > 0$ sufficiently small. Then there exists a two-dimensional, $C^k$-smooth center manifold
		\[
		\mathcal M_1^- := \{ (h_1^-(r_1,\eps_1,\sigma), r_1, \eps_1) : r_1 \in [0,\rho], \eps_1 \in [0,\delta] \} , 
		\]
	where $h_1^-(r_1,\eps_1,\sigma) = x_-(\sigma) + O(r_1, \eps_1)$. 
	In particular,
	\[
	\mathcal M_1^- |_{\eps_1 = 0} = \mathcal S_1^- , \qquad
	\mathcal M_1^- |_{r_1 = 0} = \mathcal N_1^- ,
	\]
	where $\mathcal S_1^-$ is the critical manifold defined in \eqref{eq:S1_minus} and $\mathcal N_1^-$ is a 1-dimensional center manifold emanating from $P_-$ in $\{r_1 = 0\}$.
	\end{lemma}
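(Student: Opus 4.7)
The plan is to apply the center manifold theorem at the equilibrium $P_- = (x_-(\sigma), 0, 0)$ of the chart-$K_1$ system \eqref{eq:K1_eqns}, treating $\sigma$ as a parameter. First, I would verify that $P_-$ is indeed an equilibrium (it is, since $f_-(x_-, 0, \sigma) = 0$ by definition of $\mathcal S_1^-$ and the right-hand sides of $r_1'$ and $\eps_1'$ both vanish to second order at the origin). Computing the linearization, the $r_1$ and $\eps_1$ equations contribute two zero eigenvalues, while the $n \times n$ block $D_x f_-(x_-, 0, \sigma)$ has eigenvalues uniformly bounded away from the imaginary axis by Assumption \ref{ass:normal_hyperbolicity}. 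The center subspace is thus two-dimensional and spanned by the $r_1, \eps_1$ directions, with an $n$-dimensional hyperbolic complement.

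Second, I invoke the standard parameter-dependent $C^k$ center manifold theorem to obtain a 2D local center manifold $\mathcal M_1^-$ as a graph $x = h_1^-(r_1, \eps_1, \sigma)$ defined for $(r_1, \eps_1) \in [0,\rho] \times [0, \delta]$ with $\rho, \delta > 0$ sufficiently small. Tangency of the graph to the center subspace at $P_-$ combined with smoothness of $f$ gives the expansion $h_1^-(r_1,\eps_1,\sigma) = x_-(\sigma) + O(r_1, \eps_1)$.

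Third, I identify the two distinguished restrictions, exploiting the fact that both hyperplanes $\{\eps_1 = 0\}$ and $\{r_1 = 0\}$ are invariant under \eqref{eq:K1_eqns} (because $\eps_1' = \eps_1^2$ and $r_1' = -r_1 \eps_1$). Their intersections with $\mathcal M_1^-$ are therefore invariant 1D submanifolds. On $\{\eps_1 = 0\}$ the system reduces to the $r_1$-parametrized family $x' = f_-(x, -r_1, \sigma)$, $r_1' = 0$, whose equilibrium set is precisely $\mathcal S_1^-$ (existing as a $C^k$ graph $x = h_0^-(-r_1, \sigma)$ by the implicit function theorem and Assumption \ref{ass:normal_hyperbolicity}). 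Since trajectories off $\mathcal S_1^-$ in this slice are driven by the hyperbolic $x$-directions and cannot be confined to the center direction, any invariant curve tangent to the $r_1$-axis at $P_-$ must consist of equilibria and hence agree locally with $\mathcal S_1^-$. For the second restriction I set $\mathcal N_1^- := \mathcal M_1^- \cap \{r_1 = 0\}$, which is a 1D $C^k$ center manifold for the restricted system $x' = f_-(x, 0, \sigma) + O(\eps_1^l)$, $\eps_1' = \eps_1^2$, emanating from $P_-$ and tangent to the $\eps_1$-axis there.

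The main obstacle is the classical non-uniqueness of center manifolds, which in principle leaves open whether the $\eps_1 = 0$ slice of $\mathcal M_1^-$ produced by the abstract theorem actually coincides with $\mathcal S_1^-$. I would handle this in one of two equivalent ways: either by the uniqueness argument sketched above (the only invariant curve in the $\{\eps_1 = 0\}$ slice tangent to the center direction is the equilibrium set), or by building the choice into the graph transform / Lyapunov–Perron construction, prescribing $h_1^-(r_1, 0, \sigma) := h_0^-(-r_1, \sigma)$ as a boundary condition on the invariant axis $\{\eps_1 = 0\}$ and extending via the standard contraction-mapping scheme. Either route pins down a center manifold with the stated properties.
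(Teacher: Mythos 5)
Your proof is correct and follows essentially the same route as the paper: apply the center manifold theorem at $P_-$ after observing from Assumption \ref{ass:normal_hyperbolicity} that the linearization of \eqref{eq:K1_eqns} there has $n$ eigenvalues bounded away from the imaginary axis and two zero eigenvalues coming from the $r_1'$ and $\eps_1'$ equations. The paper's proof is a one-liner that leaves the identification of the two restrictions implicit; your extra care over non-uniqueness, pinning down $\mathcal M_1^-|_{\eps_1=0}=\mathcal S_1^-$ by noting that any invariant graph over $r_1$ in the slice $\{\eps_1=0\}$ (where $r_1'=0$) must consist of equilibria, is a sound and worthwhile elaboration that the paper omits.
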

	
	\begin{proof}
	Assumption \ref{ass:normal_hyperbolicity} implies that the Jacobian matrix appearing in the linearisation of system \eqref{eq:K1_eqns} at $P_-$ has $n$ eigenvalues that are bounded away from the imaginary axis, and 2 zero eigenvalues due to the equations for $r_1'$ and $\eps_1'$. From here the result follows from direct calculations and center manifold theory.
	\end{proof}
	
	\rev{Lemma \ref{lem:K1_cm} explains the local geometry near $P_-$ in Figure \ref{fig:blow-up}, which shows the relative positioning of the local critical and center manifolds $\mathcal S_1^-$ and $\mathcal N_1^-$ respectively, along which $\mathcal M_1^-$ has its base (they are depicted without the subscript and in global coordinates).} The locally invariant manifold $\mathcal S^-_\eps$ described in Lemma \ref{lem:slow_manifolds} is obtained after applying the blow-down transformation $(r_1, \eps_1) = (-s, -\eps / s)$ to $\mathcal M^-_1$ on $s \in [-\rho, 0)$. It is also worthy to note that in the attracting case where all of the eigenvalues of $D_xf_-(p_-)$ are bounded inside the left-half plane, the center manifold $\mathcal M_1^\pm$ is exponentially attracting with a contraction rate determined by the size of the spectral gap.
	
	
	\begin{remark}
	\label{rem:past_limit}
	The dynamics within the invariant set $\{ r_1 = \eps_1 = 0 \}$ are governed by the \textit{autonomous} $n$-dimensional ODE system
	\[
	x' = f_-(x,0,\sigma) ,
	\]
	for which $x_-$ is a hyperbolic equilibrium due to Assumption \ref{ass:normal_hyperbolicity}. This is precisely the past limiting system that one would obtain by setting $\eps = 0$ in the original equation \eqref{eq:main} and taking $t \to - \infty$ in the resulting asymptotically autonomous system.
	\end{remark}
	

	\subsection{Dynamics in chart $K_3$}
	\label{sec:K3}
	
	We now consider the dynamics in $K_3$. Since the analysis closely resembles the analysis in $K_1$, we present fewer details. Writing system \eqref{eq:main_extended} in $K_3$ coordinates, we obtain
	\begin{equation}
	\label{eq:K3_eqns}
	\begin{split}
		x' &= f_+(x, r_3, \sigma) + O(r_3 \eps_3, \eps_3^l) , \\
		r_3' &= r_3 \eps_3 , \\
		\eps_3' &= - \eps_3^2 .
	\end{split}
	\end{equation}
	Similarly to the $K_1$ analysis, we are primarily interested in local dynamics within a neighbourhood defined by $\eps_3 \in [0,\delta]$ and $r_3 \in [0, \rho]$.
	
	System \eqref{eq:K3_eqns} has a 1-dimensional critical manifold
	\begin{equation}
	\label{eq:S3_+}
	\mathcal S^+_3 := \left\{ (x,h_0^+(r_3, \sigma),0) : r_3 \in [0,\rho] \right\} ,
	\end{equation}
	which corresponds to the blown-up image of \rev{$\mathcal S^+$} for $r_3 > 0$, now extended up to $r_3 = 0$, i.e.~up to the point $P_+ : (x_+(\sigma),0,0)$ \rev{(denoted by $p_+$ in Figures \ref{fig:heteroclinic} and \ref{fig:blow-up})}.
	
	\begin{lemma}
	\label{lem:K3_cm}
	Fix $\rho, \delta > 0$ sufficiently small. Then there exists a two-dimensional, $C^k$-smooth center manifold
	\[
	\mathcal M_3^+ := \{ (h_3^+(r_3,\eps_3,\sigma), r_3, \eps_3) : r_3 \in [0,\rho], \eps_3 \in [0,\delta] \} ,
	\]
	where $h_3^+(r_3,\eps_3,\sigma) = x_+(\sigma) + O(r_3, \eps_3)$. 
	In particular,
	\[
	\mathcal M_3^+ |_{\eps_3 = 0} = \mathcal S_3^+ , \qquad
	\mathcal M_3^+ |_{r_3 = 0} = \mathcal N_3^+ ,
	\]
	where $\mathcal S_3^+$ is the critical manifold defined in \eqref{eq:S3_+} and $\mathcal N_3^+$ is a 1-dimensional center manifold emanating from $P_+$ in $\{r_3 = 0\}$.
	\end{lemma}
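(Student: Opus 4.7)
The plan is to transcribe the proof of Lemma \ref{lem:K1_cm} from chart $K_1$ to chart $K_3$, as the two analyses are structurally identical. The first step is to linearize system \eqref{eq:K3_eqns} at $P_+ = (x_+(\sigma), 0, 0)$. The equations $r_3' = r_3 \eps_3$ and $\eps_3' = -\eps_3^2$ vanish to second order there, contributing two zero eigenvalues, while the $x$-block is $D_x f_+(x_+(\sigma), 0, \sigma)$ and has $n$ eigenvalues uniformly bounded away from the imaginary axis for $\sigma \in \Lambda$ by Assumption \ref{ass:normal_hyperbolicity}. The spectrum at $P_+$ therefore splits cleanly into a two-dimensional center part tangent to the $(r_3, \eps_3)$-plane and an $n$-dimensional hyperbolic part along $x$.

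Applying the standard $C^k$ center manifold theorem (with $\sigma$ treated as a trivial parameter via $\sigma' = 0$) then produces a locally invariant, $C^k$-smooth two-dimensional manifold through $P_+$. By tangency to the $(r_3, \eps_3)$-plane, after possibly shrinking $\rho$ and $\delta$ it admits a graph representation $x = h_3^+(r_3, \eps_3, \sigma)$ on $[0, \rho] \times [0, \delta]$, which I take as the definition of $\mathcal{M}_3^+$. The expansion $h_3^+(r_3, \eps_3, \sigma) = x_+(\sigma) + O(r_3, \eps_3)$ is then immediate from $C^k$ smoothness together with the equilibrium identity $h_3^+(0, 0, \sigma) = x_+(\sigma)$.

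The restriction statements follow from the invariance of the coordinate hyperplanes $\{\eps_3 = 0\}$ and $\{r_3 = 0\}$. On $\{\eps_3 = 0\}$ the dynamics reduce to $x' = f_+(x, r_3, \sigma)$ with $r_3$ constant, so the intersection $\mathcal{M}_3^+ \cap \{\eps_3 = 0\}$ is a one-dimensional graph of equilibria through $P_+$, which by the implicit function theorem coincides with $\mathcal{S}_3^+$ defined in \eqref{eq:S3_+}. On $\{r_3 = 0\}$ the intersection is a one-dimensional invariant graph $x = h_3^+(0, \eps_3, \sigma)$ through $P_+$, which we define to be $\mathcal{N}_3^+$. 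The only difference from the $K_1$ analysis is the sign change $\eps_3' = -\eps_3^2$ in place of $\eps_1' = +\eps_1^2$, which reverses the orientation of the flow along $\mathcal{N}_3^+$ without affecting the existence or smoothness of the center manifold. Accordingly, no novel obstacle arises, and the main burden of the proof is simply the careful application of the center manifold machinery already invoked in Lemma \ref{lem:K1_cm}.
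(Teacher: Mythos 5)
Your proof follows the paper's approach exactly: the paper simply says the proof is analogous to that of Lemma \ref{lem:K1_cm}, which itself consists of noting the spectral split at $P_+$ (two zero eigenvalues from the $r_3', \eps_3'$ equations and $n$ hyperbolic eigenvalues from $D_x f_+(p_+)$ guaranteed by Assumption \ref{ass:normal_hyperbolicity}) and then invoking center manifold theory. Your write-up fills in the details the paper elides, including the graph representation and the identification of the restrictions to $\{\eps_3 = 0\}$ and $\{r_3 = 0\}$, and correctly observes that the sign reversal in $\eps_3'$ is immaterial to existence.
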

	
	\begin{proof}
	The proof is similar to the proof of Lemma \ref{lem:K1_cm}, so we omit the details.
	\end{proof}
	
	\rev{Lemma \ref{lem:K1_cm} explains the local geometry near $p_+$ in Figure \ref{fig:blow-up}.} The locally invariant manifold $\mathcal S^+_\eps$ described in Lemma \ref{lem:slow_manifolds} is obtained after applying the blow-down transformation $(r_3, \eps_3) = (s, \eps / s)$ to $\mathcal M^+_3$ on $s \in (0,\rho]$. Combining this with the corresponding result from the chart $K_1$ analysis suffices to prove Lemma \ref{lem:slow_manifolds}.
	
	
	\begin{remark}
	An analogue to Remark \ref{rem:past_limit} applies. Namely, the dynamics within $r_3 = \eps_3 = 0$ are invariant and governed by the $n$-dimensional ODE system
	\[
	x' = f_+(x,0,\sigma) ,
	\]
	for which $x_+$ is a hyperbolic equilibrium due to Assumption \ref{ass:normal_hyperbolicity}. This is precisely the future limiting system that one would obtain by setting $\eps = 0$ in the original equation \eqref{eq:main} and taking $t \to \infty$ in the resulting asymptotically autonomous system.
	\end{remark}
	

	\section{Proofs}
	\label{sec:proofs}
	
	Based on the geometric formalism and results derived in Section \ref{sec:geometric_blow-up}, we now proceed with the proofs for Theorems \ref{thm:tipping} and \ref{thm:normal_hyperbolicity}. We begin with the former.

	\subsection{Proof of Theorem \ref{thm:tipping}}
	\label{sec:proof_of_thm_tipping}
	
	In the preceding sections, and in particular in Lemmas \ref{lem:K1_cm} and \ref{lem:K3_cm}, we identified center manifolds about the points $P_\pm$ which we shall denote here by $\mathcal M^\pm$ (i.e.~we shall drop the subscripts). These center manifolds can be used to describe the extension of the slow manifolds $\mathcal S^\pm_\eps$ into the inner transitionary regime where $s = O(\eps)$, which corresponds to the region covered by chart $K_2$ in the blown-up space. The aim is now to consider the extension of $\mathcal M^-$ ($\mathcal M^+$) under the forward (backward) flow induced by system \eqref{eq:main_extended}, and to show that these manifolds intersect transversally if the conditions of Assumption \ref{ass:heteroclinic}(I), (II) or (III) are satisfied.
	
	We work primarily in chart $K_2$, where the equations are given by
	\begin{equation}
	\label{eq:K2_eqns}
	\begin{split}
		x' &= f(x, \gamma(\mu s_2), r_2 s_2, \sigma, r_2) , \\
		s_2' &= 1 .
	\end{split}
	\end{equation}
	\rev{On compact domains, t}his is a regular perturbation problem with rate parameter $\mu$, bifurcation parameter $\sigma$, and perturbation parameter $0 < r_2 \ll 1$. In fact, it is naturally identified with the original system \eqref{eq:main} using $r_2 = \eps$ and $s = r_2 s_2 = \eps t \implies s_2 = t$. By Assumption \ref{ass:heteroclinic}, the limiting system 
	\begin{equation}
	\label{eq:K2_limiting_system}
	\begin{split}
		x' &= \rev{f(x, \gamma( \mu s_2), 0, \sigma, 0) } , \\
		s_2' &= 1 ,
	\end{split}
	\end{equation}
	has a special solution with parameterisation $\phi(t) = (\varphi(t),t)$, where $\varphi(t) \to x_\pm$ as $t \to \pm \infty$, either for all $\mu$ and $\sigma$ in the case of Assumption \ref{ass:heteroclinic}(I), for $\mu = \mu_c$ and fixed $\sigma$ in the case of Assumption \ref{ass:heteroclinic}(II), or for $\sigma = \sigma_c$ and fixed $\mu$ in the case of Assumption \ref{ass:heteroclinic}(III). In the blown-up space, this corresponds to a heteroclinic connection between the points $P_-$ and $P_+$ identified in the chart $K_1$ and $K_3$ analyses respectively; this is shown in the center subfigure of Figure \ref{fig:heteroclinic}. In particular, $\varphi(t)$ has at most algebraic growth as $t \to \pm \infty$, since it has base along the center manifolds $\mathcal N^-_1$ close to $P_-$ and $\mathcal N_3^+$ close to $P_+$. This follows from Lemmas \ref{lem:K1_cm} and \ref{lem:K3_cm} and the fact that the limiting system \eqref{eq:K2_limiting_system} is defined for $r_2 = 0$, which corresponds to $r_1 = 0$ in $K_1$ and $r_3 = 0$ in $K_3$ due to the change of coordinates formulae in \eqref{eq:kappa_ij} (in the regions where the charts overlap). 
	
	\begin{remark}
	The limiting system \eqref{eq:K2_limiting_system} corresponds to dynamics on the blow-up cylinder, since $r_2 = 0$. This system \rev{can be obtained directly from the asymptotically autonomous limiting system \eqref{eq:limit_problem} by introducing $s_2 = t$ as a new dependent variable and extending the phase space}. As $t \to -\infty$ ($t \to \infty$), bounded solutions of system \eqref{eq:K2_limiting_system} approach the left (right) intersection of the blow-up cylinder with the (hyper)plane $\bar \eps = 0$. In fact, applying the blow-up map and subsequently restricting to $r = 0$ is equivalent to making a compactification of time, where the dynamics at infinity can be studied in charts $K_1$ and $K_3$. In addition to this, the blow-up also allows one to connect to the outer limiting problems, as described in Remark \ref{rem:blow-up_regimes}, which is not possible in approaches based on compactification alone.
	\end{remark}
	
	As shown in \cite{Wechselberger2002}, a distance function which measures the separation of the \rev{manifolds} $\mathcal M^-$ and $\mathcal M^+$ can be formulated in terms of solutions to the adjoint problem associated to the \rev{(linear)} variational equations along $\phi(t)$. Denote the right-hand side of \eqref{eq:K2_eqns} by $F(x,s_2,\mu,\sigma,r_2)$. Then the relevant adjoint variational problem is given by
	\[
	\psi'(t) = - (\Diff_{(x,s_2)} F)^\transpose \big|_{\rev{(\phi(t),\mu,\sigma,0)}} \psi(t) = -
	\begin{pmatrix}
	(\Diff_xf)^\transpose & \mathbb O_{n,1} \\
	\left( \frac{\partial f}{\partial s_2} \right)^\transpose & 0
	\end{pmatrix}
	\bigg|_{\rev{(\varphi(t), \gamma (\mu t), 0, \sigma, 0)}} \psi(t) .
	\]
	If we separate components by writing $\psi(t) = (\psi_1(t), \psi_2(t)) \in \R^n \times \R$, then $\psi_1(t)$ solves the $n$-dimensional adjoint variational problem in \eqref{eq:adjoint_variational_eqn}. We now define a codimension-1 (i.e.~$n$-dimensional) vertical section by $\Omega := \{ (x,0) : x \in J \}$, where $J \subseteq \R^n$ is a neighbourhood about the point $\varphi(0)$. Since the orbit corresponding to the special solution $\phi(t)$ intersects $\Omega$ transversally, $\mathcal M^-$ and $\mathcal M^+$ are guaranteed to intersect $\Omega$ transversally as long as $r_2$ is sufficiently small and, in the case of Assumption \ref{ass:heteroclinic}(II) or (III), if the quantity $\mu - \mu_c$ or  $\sigma - \sigma_c$ respectively is also sufficiently small. The argument from this point forward is similar for all four assertions in Theorem \ref{thm:tipping}. However, it is necessary to consider each case separately.

	\subsubsection{Proving Assertions 1 and 3}
	
	Consider the case in which Assumption \ref{ass:heteroclinic}(I) holds. If $r_2$ is sufficiently small then the distance between the the $x$-coordinates of the intersections $\mathcal M^- \cap \Omega$ and $\mathcal M^+ \cap \Omega$ is shown in \cite{Wechselberger2002} to be measured by the signed distance function
	\[
	\mathcal D(\mu, \sigma, r_2) = 
	\frac{\partial \mathcal D}{\partial r_2}(\mu,\sigma,0) r_2 + O \left( r_2^2 \right) ,
	\]
	where the coefficient function takes the form of a \textit{Melnikov integral}, in this case given by
	\[
	\frac{\partial \mathcal D}{\partial r_2}(\mu,\sigma,0) = 
	\int_{-\infty}^\infty \left\langle \psi(t), \frac{\partial F}{\partial r_2} (\phi(t),\mu,\sigma,0) \right\rangle dt =
	\int_{-\infty}^\infty \psi_1(t) \frac{\partial f}{\partial r_2} (\varphi(t),\gamma(\mu t),0,\sigma,0) dt .
	\]
	Using the chain rule and $\eps = r_2$ to evaluate the partial derivative in the integrand, we obtain
	\[
	\frac{\partial \mathcal D}{\partial r_2}(\mu,\sigma,0) = 
	\int_{-\infty}^{\infty} \psi_1(t) \left[ t \frac{\partial f}{\partial s}(\varphi(t), \gamma(\mu t), 0, \sigma, 0) + \frac{\partial f}{\partial \eps} (\varphi(t), \gamma(\mu t), 0, \sigma, 0) \right] dt
	= \mathcal G_\eps(\mu,\sigma) ,
	\]
	i.e.~precisely the function defined in \eqref{eq:mathcal_G_eps}, where $\psi_1(t)$ solves the adjoint variational equation \eqref{eq:adjoint_variational_eqn} and decays exponentially as $t \to \pm \infty$. \rev{The existence of such a solution follows from Assumptions \ref{ass:normal_hyperbolicity}, \ref{ass:heteroclinic} and the requirements on the spectrum of $D_x f_\pm(p_\pm)$; recall the discussion just prior to the statement of Theorem \ref{thm:tipping}}.
	
	Now define the rescaled distance function
	\[
	\bar{\mathcal D}(\mu, \sigma, r_2) := r_2^{-1} \mathcal D(\mu, \sigma, r_2) = \mathcal G_\eps(\mu,\sigma) + O(r_2) ,
	\]
	and fix $\sigma \in \Lambda$. If there exists a critical rate $\mu_c$ such that $\mathcal G_\eps(\mu_c, \sigma) = 0$ and $(\partial \mathcal G_\eps / \partial \mu)(\mu_c, \sigma) \neq 0$, then we obtain
	\[
	\bar{\mathcal D}(\mu_c, \sigma, 0) = 0 , \qquad 
	\frac{\partial \bar{\mathcal D}}{\partial r_2}(\mu_c, \sigma, 0) \neq 0 .
	\]
	Thus, by the implicit function theorem, there exists an $\eps_0 > 0$ and a function $\tilde \mu_c : (0,\eps_0) \to \R$ 
	such that $\mathcal D(\tilde \mu_c(r_2), \sigma, r_2) = 0$ for all 
	$r_2 \in (0,\eps_0)$. Using $r_2 = \eps$ gives the asymptotics
	\[
	\tilde \mu_c(\eps) = \mu_c + O(\eps) 
	\]
	as $\eps \to 0$, as required. This completes the proof of Assertion 1. Similar arguments can be used to prove Assertion 3, so we omit the details for brevity.

	\subsubsection{Proving Assertions 2 and 4}
	
	We first prove Assertion 2. Given Assumption \ref{ass:heteroclinic}(II) and using the results from \cite{Wechselberger2002} again, the signed distance function can be expanded in both $r_2$ and $\tilde \mu := \mu - \mu_c$:
	\begin{equation}
	\label{eq:distance_fn_r}
	\mathcal D(\tilde \mu, \sigma, r_2) = 
	\frac{\partial \mathcal D}{\partial \tilde \mu} (0,\sigma,0) \tilde \mu +
	\frac{\partial \mathcal D}{\partial r_2}(0,\sigma,0) r_2 + O \left( |(\tilde \mu, r_2)|^2 \right) ,
	\end{equation}
	where we now consider $\mathcal D$ to be a function of $\tilde \mu$, and the coefficient functions are given by the Melnikov integrals
	\begin{equation}
	\label{eq:Melnikov_integrals_r}
	\begin{split}
		\frac{\partial \mathcal D}{\partial \tilde \mu} (0, \sigma, 0) &= 
		\int_{-\infty}^\infty \left\langle \psi(t), \frac{\partial F}{\partial \mu} (\phi(t),\mu_c,\sigma,0) \right\rangle dt \\
		&=\int_{-\infty}^\infty \psi_1(t) t \gamma'(\mu_c t) \frac{\partial f}{\partial \gamma} (\varphi(t), \gamma(\mu_c t),0,\sigma,0) dt , \\
		&= \mathcal G_\mu(\sigma) ,
	\end{split}
	\end{equation}
	and
	\begin{equation}
	\label{eq:mathcal_G_eps_integrals_2}
	\frac{\partial \mathcal D}{\partial r_2}(0,\sigma,0) = 
	\mathcal G_\eps(\mu_c, \sigma) .
	\end{equation}
	By assumption, we have that $\mathcal G_\mu(\sigma) \neq 0$ ($\sigma$ is fixed here). Thus
	\[
	\mathcal D(0, \sigma, 0) = 0, \qquad 
	\frac{\partial \mathcal D}{\partial \tilde \mu}(0, \sigma, 0) \neq 0 ,
	\]
	so by the implicit function theorem there exists an $\eps_0 > 0$ and a function $\hat \mu_c : (0,\eps_0) \to \R$ 
	such that $\mathcal D(\hat \mu(r_2), \sigma, r_2) = 0$ for all 
	$r_2 \in (0,\eps_0)$. Using $r_2 = \eps$ gives the asymptotics
	\[
	\hat \mu_c(\eps) = \mu_c + O(\eps) 
	\]
	as $\eps \to 0$, as required. Equations \eqref{eq:distance_fn_r}, \eqref{eq:Melnikov_integrals_r}, \eqref{eq:mathcal_G_eps_integrals_2} and the implicit function theorem imply the form of the higher order correction in \eqref{eq:mu_c_asymptotics}. This completes the proof of Assertion 2.
	
	\

	The proof of Assertion 4 is similar. In this case, $\mu$ is fixed and the signed distance function can be expanded in $r_2$ and $\tilde \sigma := \sigma - \sigma_c$:
	\begin{equation}
	\label{eq:distance_fn_b}
	\mathcal D(\mu, \tilde \sigma, r_2) = 
	\frac{\partial \mathcal D}{\partial \tilde \sigma} (\mu,0,0) \tilde \sigma +
	\frac{\partial \mathcal D}{\partial r_2}(\mu,0,0) r_2 + O \left( |(\tilde \sigma, r_2)|^2 \right) ,
	\end{equation}
	where we now consider $\mathcal D$ to be a function of $\tilde \sigma$, and the coefficient functions are given by the Melnikov integrals
	\begin{equation}
	\label{eq:Melnikov_integrals_b}
	\begin{split}
		\frac{\partial \mathcal D}{\partial \tilde \sigma} (\mu, 0, 0) &= 
		\int_{-\infty}^\infty \left\langle \psi(t), \frac{\partial F}{\partial \sigma} (\phi(t),\mu,\sigma_c,0) \right\rangle dt \\
		&= \int_{-\infty}^\infty \psi_1(t) \frac{\partial f}{\partial \sigma} (\varphi(t), \gamma(\mu t),0,\sigma_c,0) dt , \\
		&= \mathcal G_\sigma(\mu) , \\
		\frac{\partial \mathcal D}{\partial r_2}(\mu,0,0) &= 
		\mathcal G_\eps(\mu, \sigma_c) .
	\end{split}
	\end{equation}
	The result follows similarly to the proof of Assertion 2 above via the implicit function theorem.
	\qed

	\subsection{Proof of Theorem \ref{thm:normal_hyperbolicity}}
	\label{sec:proof_of_thm_normal_hyperbolicity}
	
	We turn now to the proof of Theorem \ref{thm:normal_hyperbolicity}. Assertion 4 can be proven directly using Fenichel theory and asymptotic expansions, so we shall omit the details of the proof of this assertion. Moreover, Assumption \ref{ass:normal_hyperbolicity} is still in force, so Lemmas \ref{lem:K1_cm} and \ref{lem:K3_cm} are still true. Since the normal contractivity property in Assumption \ref{ass:tracking} implies that the eigenvalues of the $n \times n$ matrix $D_x f$ are bounded inside the left half plane for all $(x, s, \eps) \in \mathcal K$, the center manifolds $\mathcal M^\pm$ are attracting. The main task from here is to 
	prove that their `connection' via the inner transitionary layer is also attracting and normally hyperbolic for 
	for all $s_2$-values on \rev{the} arbitrarily large but bounded interval \rev{$s_2 \in \mathcal I = [-M, M]$} 
	in chart $K_2$. In order to prove the desired results, we use established results on the existence of normally hyperbolic manifolds, in particular \cite[Thm.~7.5]{Nipp2013}. 
	In order to apply the results in \cite{Nipp2013}, we need to
	\begin{itemize}
	\item[(i)] Prepare the equations with a cutoff procedure designed to control the `central directions';
	\item[(ii)] Show that the `auxiliary system' obtained in \rev{S}tep (i) satisfies the conditions that are necessary for the application of results in \cite{Nipp2013};
	\item[(iii)] Apply the results from \cite{Nipp2013} to the auxiliary system and use the output to complete the proof of Theorem \ref{thm:normal_hyperbolicity}.
	\end{itemize}
	We start with Step (i).

	\subsubsection{Step (i): Preparing the equations}
	\label{sec:conditions_1}
	
	Let $\mathcal X \subset \R^n$ be a non-empty, convex and open set, $\rev{\mathcal I} \subset \mathcal I' := (-M', M') \subset \R$ where $M' > \rev{M}$ so that $\rev{{{\mathcal I}}} \subset \mathcal I'$, and $\mathcal E := (0, \eps_0) \subset \R$. Now let $F : \mathcal X \times \mathcal I' \times \mathcal E \to \R^n$ denote the function defined by
	\[
	F(x,s,\eps) := f(x, \gamma(\mu s), \eps s, \eps) ,
	\]
	where $f$ is the same function considered in earlier sections except that we have suppressed the parameter dependence on $\sigma$ to simplify the notation (there is no added difficulty in the parameter-dependent case). Note that $F$ is in $C^k_b$ since $f$ is $C^k$ and $\mathcal X \times \mathcal I' \times \mathcal E$ is bounded. Using the above notation, we may rewrite the equations in $K_2$ (recall system \eqref{eq:K2_eqns}) as
	\begin{equation}
	\label{eq:K2_NS}
	\begin{split}
		x' &= F(x,s,\eps) , \\
		s' &= 1 ,
	\end{split}
	\end{equation}
	restricted to $(x,s,\eps) \in \mathcal X \times \mathcal I' \times \mathcal E$, where we have dropped the subscript on $s$ (which is written as $\hat s$ in Theorem \ref{thm:normal_hyperbolicity}) and used $r_2 = \eps$ in order to simplify notation.
	
	In order to apply the results in \cite{Nipp2013} directly to system \eqref{eq:K2_NS}, the dynamics would need to be (i) inflowing with respect to $\mathcal X$, and (ii) outflowing with respect to $\rev{\mathcal I}$. Unfortunately, the latter requirement can never be satisfied, since $s' = 1 > 0$ everywhere, implying inflowing dynamics at the left boundary $s = - \rev{M}$. In order to remedy this, we need to introduce an auxiliary system which coincides with system \eqref{eq:K2_NS} on the set $\rev{\mathcal K} := \mathcal X \times \rev{\mathcal I} \times \mathcal E$, but satisfies the relevant inflowing condition on a slightly larger set $\rev{\widetilde{\mathcal K}} := \mathcal X \times \rev{\widetilde{\mathcal I}} \times \mathcal E$, where \rev{$\widetilde{\mathcal I} := (- \widetilde M, \widetilde M)$} for a constant \rev{$\widetilde M \in (M, M')$} \rev{(}notice that \rev{${\mathcal I} \subset \widetilde{\mathcal I} \subset \mathcal I'$} and therefore \rev{${\mathcal K} \subset \widetilde{\mathcal K} \subset \mathcal K'$)}. \rev{Specifically, we consider} the auxiliary system
	\begin{equation}
	\label{eq:K2_NS_auxiliary}
	\begin{split}
		x' &= F(x,s,\eps) , \\
		s' &= 1 - \hat \delta \nu(s) ,
	\end{split}
	\end{equation}
	where $\hat \delta > 1$ is a constant close to but greater that unity, and $\nu : \R \to [0,1]$ is a 1-sided $C^\infty$ cutoff function of the form
	\[
	\nu(s) := 
	\begin{cases}
	1 , & s \leq - \rev{\widetilde M}, \\
	\e^{1/(\rev{\widetilde M} - \rev{M})} \left( 1 - \e^{-1/(s + \rev{\widetilde M})} \right) \e^{1/(s + \rev{M})}, & s \in (-\rev{\widetilde M}, - \rev{M}) , \\
	0 , & s \geq - \rev{M} .
	\end{cases}
	\]
	In particular, $\nu'(s) < 0$ on $s \in (-\rev{\widetilde M}, -\rev{M})$ and
	\[
	-\frac{1}{(\rev{\widetilde M} - \rev{M})^2} \leq \nu'(s) \leq 0 
	\]
	for all $s \in \R$. \rev{Importantly, the} auxiliary system \eqref{eq:K2_NS_auxiliary} is (i) outflowing with respect to \rev{$\widetilde{\mathcal I}$}, since $s'|_{s = - \rev{\widetilde M}} = 1 - \hat \delta < 0$ and $s'|_{s = \rev{\widetilde M}} = 1 > 0$, and (ii) coincident with system \eqref{eq:K2_NS} on $\rev{\mathcal K}$.
	

	\subsubsection{Step (ii): Checking conditions}
	\label{sec:conditions_2}
	
	In this section, we check the conditions for the applicability of the results in \cite{Nipp2013} to the auxiliary system \eqref{eq:K2_NS_auxiliary}.
	
	\begin{lemma}
	\label{lem:NS_HD}
	System \eqref{eq:K2_NS_auxiliary} satisfies \cite[Hypothesis HD]{Nipp2013}. In particular, the following properties are satisfied:
	\begin{itemize}
		\item[(a)] System \eqref{eq:K2_NS_auxiliary} is inflowing with respect to $\mathcal X$.
		\item[(b)] System \eqref{eq:K2_NS_auxiliary} is outflowing with respect to \rev{$\widetilde{\mathcal I}$}.
		\item[(c)] There exists non-negative constants $l_{12}$ and $l_{23}$ such that the bounds
		\[
		\| D_x F \| \leq l_{21} , \qquad \| D_\eps F \| \leq l_{23} , 
		\]
		hold uniformly on \rev{$\widetilde{\mathcal K}$}. Moreover, the logarithmic norm bounds
		\begin{equation}
			\label{eq:bounds_logarithmic_norm}
			\lim_{h \to 0} \frac{| 1 - h \hat \delta \nu'(s) | - 1}{h} \leq 0 , \qquad 
			\lim_{h \to 0} \frac{\| \mathbb I - h D_xF \| - 1}{h} \leq l_{22} < 0 ,
		\end{equation}
		hold uniformly on \rev{$\widetilde{\mathcal K}$}, for some contractivity constant $l_{22}$.
	\end{itemize}
	\end{lemma}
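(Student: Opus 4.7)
The plan is to verify conditions (a), (b), (c) in turn, with (a) and the second inequality of (c) essentially inherited from Assumption \ref{ass:tracking}, and with (b) and the first inequality of (c) following from the explicit construction of the cutoff function $\nu$.

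For (a), I would identify $F(x,s,\eps) = f(x,\gamma(\mu s), \eps s, \eps)$ so that the $x$-component of \eqref{eq:K2_NS_auxiliary} is formally the same as the $x$-equation in \eqref{eq:main_no_pars} under the time-scale identification $s = \hat s = t$ used in chart $K_2$. Since the additional term $-\hat\delta\nu(s)$ affects only the $s$-equation, the inflowing condition $n(x)\cdot F(x,s,\eps)<0$ on $\partial\mathcal X$ transfers directly from Assumption \ref{ass:tracking}(i), provided $\eps_0 \le \rho/\widetilde M$ so that the evaluation point of $f$ remains in the set $\mathcal A$ on which Assumption \ref{ass:tracking} is stated. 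Condition (b) follows immediately from the defining properties of $\nu$: since $\nu(-\widetilde M)=1$ we have $s'|_{s=-\widetilde M}=1-\hat\delta<0$ (using $\hat\delta>1$), and since $\nu(\widetilde M)=0$ we have $s'|_{s=\widetilde M}=1>0$, which is precisely the outflowing property with respect to $\widetilde{\mathcal I}$.

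For (c), I would compute $D_x F = D_x f$ and, by the chain rule, $D_\eps F = s\,\partial_3 f + \partial_5 f$, where $\partial_3 f$ and $\partial_5 f$ denote the partial derivatives of $f$ with respect to its third and fifth arguments. Both quantities are continuous on the compact closure of $\widetilde{\mathcal K}$ and hence bounded; the resulting suprema define (or bound) $l_{21}$ and $l_{23}$, respectively, with the factor $|s|\le \widetilde M$ absorbed into the latter. The central logarithmic norm bound then follows from the explicit sign and size bounds $\nu'(s) \le 0$ and $|\nu'(s)|\le (\widetilde M-M)^{-2}$ listed just below the definition of $\nu$, combined with the standard scalar identity $\lim_{h\to 0^+}(|1+h\lambda|-1)/h = \lambda$ applied with $\lambda = -\hat\delta\nu'(s)$. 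The normal logarithmic norm bound is a direct restatement of Assumption \ref{ass:tracking}(ii), since $D_x F = D_x f$ at all relevant points, so the uniform negativity expressed by $l_{22}<0$ in \eqref{eq:log_norm} carries over verbatim.

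The hard part is not any single estimate but the bookkeeping between the two different time-scale interpretations of $s$ in the paper: the slow time $s \in [-\rho,\rho]$ on which Assumption \ref{ass:tracking} is stated versus the fast time $s \in [-\widetilde M, \widetilde M]$ used throughout chart $K_2$ and in the auxiliary system \eqref{eq:K2_NS_auxiliary}. Once the identification $s_{\mathrm{slow}} = \eps\, s_{\mathrm{fast}}$ is made explicit, and once $\eps_0$ is shrunk if necessary to guarantee $\eps \widetilde M \le \rho$, all of the conditions required for Hypothesis HD of \cite{Nipp2013} follow by the routine calculations sketched above.
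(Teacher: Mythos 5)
Your overall decomposition matches the paper's exactly: (a) transfers from Assumption \ref{ass:tracking}(i), (b) is the defining property of the cutoff, the operator-norm bounds in (c) follow from $F \in C^k_b$ on $\widetilde{\mathcal K}$, the normal logarithmic-norm bound restates Assumption \ref{ass:tracking}(ii), and the central bound uses $\nu'\leq 0$. Your added remark about the slow/fast time-scale bookkeeping --- that one must shrink $\eps_0$ so that $\eps\widetilde M\leq\rho$ in order for the evaluation point of $f$ to remain in $\mathcal A$ when transferring Assumption \ref{ass:tracking}(i) --- is a genuine and needed observation which the paper's one-line proof of (a) glosses over.

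However, your verification of the central logarithmic-norm bound does not close as written. Applying the scalar identity $\lim_{h\to 0^+}(|1+h\lambda|-1)/h=\lambda$ with $\lambda=-\hat\delta\nu'(s)$ yields the value $-\hat\delta\nu'(s)\geq 0$ (since $\nu'(s)\leq 0$ and $\hat\delta>1$), i.e.~the \emph{opposite} sign from the bound $\leq 0$ asserted in Lemma \ref{lem:NS_HD}. The paper's own proof concludes the limit equals $\hat\delta\nu'(s)\leq 0$. The reconciliation is that the quantity relevant to \cite[Hypothesis HD]{Nipp2013} in the central direction is the logarithmic norm of $-\partial g/\partial s$ where $g(x,s,\eps)=1-\hat\delta\nu(s)$; since $-\partial g/\partial s = \hat\delta\nu'(s)$, the scalar identity should be applied with $\lambda=\hat\delta\nu'(s)$, giving $\hat\delta\nu'(s)\leq 0$ as required. (Note the display \eqref{eq:bounds_logarithmic_norm} uses $\mathbb I - hD_xF$ while Assumption \ref{ass:tracking}(ii) uses $\mathbb I_n + hD_xf$; whichever sign convention the lemma intends, your choice $\lambda=-\hat\delta\nu'(s)$ follows the literal displayed form and lands on the side that produces $\geq 0$ rather than $\leq 0$, so the step as you have written it does not establish the claim.)
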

	
	\begin{proof}
	Property (a) follows from Assumption \ref{ass:tracking}(i), and property (b) is satisfied by construction (the cutoff term $\hat \delta \nu(s)$ was specifically designed for this purpose). The first two bounds in (c) are satisfied because the function $F$ is $C^k_b$ on \rev{$\widetilde{\mathcal K}$}, and the second logarithmic norm bound in \eqref{eq:bounds_logarithmic_norm} is true by Assumption \ref{ass:tracking}(ii). Finally, since $\nu'(s) \leq 0$ for all $s \in \R$, it follows that
	\[
	\lim_{h \to 0} \frac{| 1 - h \hat \delta \nu'(s) | - 1}{h} = 
	\hat \delta \nu'(s) \leq 0 ,
	\]
	as required.
	\end{proof}
	
	\begin{remark}
	\label{rem:NS_bounds}
	A number of the bounds in \cite{Nipp2013} are omitted in Lemma \ref{lem:NS_HD}, since they are automatically satisfied for systems in the general form \eqref{eq:K2_NS_auxiliary}. In particular, writing the $s'$ equation as $g(x,s,\eps) = 1 - \hat \delta \nu(s)$, we have
	\[
	l_{12} := \sup_{(x,s,\eps) \in \rev{\widetilde{\mathcal K}}} \left| \frac{\partial g}{\partial x} (x,s,\eps) \right| = 0 , \qquad 
	l_{13} := \sup_{(x,s,\eps) \in \rev{\widetilde{\mathcal K}}} \left| \frac{\partial g}{\partial \eps} (x,s,\eps) \right| = 0 .
	\]
	This leads to rather significant simplifications when it comes to checking the remaining conditions and deriving the main results.
	\end{remark}
	
	The next requirement is \cite[Hypothesis HDA]{Nipp2013}, i.e.~that the function $F(x^\ast, \cdot, \cdot) : \rev{\widetilde{\mathcal I}} \times \mathcal E \to \R^{n+1}$ is bounded for some $x^\ast \in \mathcal X$, but this follows immediately from the boundedness of $\rev{\widetilde{\mathcal I}} \times \mathcal E$ and the fact that $F$ is in $C^k_b$ on \rev{$\widetilde{\mathcal K}$}. The last remaining conditions are \cite[CD, CDA \& CDA(k)]{Nipp2013}. Here we simply note that due to the simplifications described in Remark \ref{rem:NS_bounds}, all three of these conditions reduce the requirement that $l_{22} < 0$. But, as already noted above, this is immediate from Assumption \ref{ass:tracking}(ii).

	\subsubsection{Step (iii): Applying known results and finishing the proof}
	
	Having verified the relevant conditions in Sections \ref{sec:conditions_1}-\ref{sec:conditions_2} above, we may now apply results from \cite{Nipp2013} to the auxiliary system \eqref{eq:K2_NS_auxiliary}.
	
	\begin{proposition}
	\label{prop:K2_auxiliary_manifold}
	Consider the auxiliary system \eqref{eq:K2_NS_auxiliary}. There exists a $C^k$ function $\omega_I : \rev{\widetilde{\mathcal I}} \times \mathcal E \to \mathcal X$ such that the following properties hold for all $\eps \in \mathcal E$:
	\begin{enumerate}
		\item The 1-dimensional manifold
		\[
		\mathcal W_I := \left\{ (\omega_I(s,\eps), s) : s \in \rev{\widetilde{\mathcal I}} \right\} 
		\]
		is negatively invariant, i.e.~if $(x,s) \in \mathcal X \times \rev{\widetilde{\mathcal I}}$, then $\Phi(x,s,t) \in \mathcal X \times \rev{\widetilde{\mathcal I}}$ for all $t \leq 0$, where $\Phi$ denotes the flow map induced by system \eqref{eq:K2_NS_auxiliary}. The function $\omega_I$ satisfies the invariance equation
		\[
		\left( 1 - \hat \delta \nu(s) \right) \frac{\partial \omega_I}{\partial s} (s, \eps) = 
		F(\omega_I(s,\eps), s, \eps) .
		\]
		\item The function $\omega_I$ is Lipschitz in both $s$ and $\eps$. More precisely, we have
		\[
		| \omega_I(s, \eps) - \omega_I(\tilde s, \eps) | \leq \frac{l_{21}}{|l_{22}|} , \qquad 
		| \omega_I(s, \eps) - \omega_I(s, \tilde \eps) | \leq \frac{l_{23}}{|l_{22}|} ,
		\]
		for all $s, \tilde s \in \rev{\widetilde{\mathcal I}}$ and $\eps, \tilde \eps \in \mathcal E$.
		\item The manifold $\mathcal W_I$ is uniformly attracting, i.e.~for all solutions $(x(t), s(t))$ with $(x(0), s(0)) \in \mathcal X \times \rev{\widetilde{\mathcal I}}$ we have
		\[
		\left| x(t) - \omega_I(s(t), \eps) \right| \leq 
		\e^{l_{22} t} \left| x(0) - \omega_I(s(0), \eps) \right| ,
		\]
		for all $t \geq 0$ such that $s(t) \in \rev{\widetilde{\mathcal I}}$.
	\end{enumerate}
	\end{proposition}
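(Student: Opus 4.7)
My plan is to obtain Proposition~\ref{prop:K2_auxiliary_manifold} as a direct application of \cite[Thm.~7.5]{Nipp2013} to the auxiliary system \eqref{eq:K2_NS_auxiliary}. All the hypotheses of that theorem (namely HD, HDA, CD, CDA and CDA($k$)) have already been checked in Section \ref{sec:conditions_2}: Lemma \ref{lem:NS_HD} provides HD, the boundedness of $F$ on the compact set $\widetilde{\mathcal K}$ handles HDA, and Remark \ref{rem:NS_bounds} reduces the remaining contractivity conditions to $l_{22} < 0$, which is Assumption \ref{ass:tracking}(ii). The conclusion of \cite[Thm.~7.5]{Nipp2013} then yields a $C^k$ graph function $\omega_I : \widetilde{\mathcal I} \times \mathcal E \to \mathcal X$ whose graph $\mathcal W_I$ is negatively invariant and uniformly attracting under the flow of \eqref{eq:K2_NS_auxiliary}, together with quantitative Lipschitz and contraction estimates.

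From here it remains to translate the output of \cite[Thm.~7.5]{Nipp2013} into the three assertions of the proposition. For Assertion 1, negative invariance of $\mathcal W_I$ is immediate; the invariance equation is obtained by differentiating the identity $x(t) = \omega_I(s(t), \eps)$ along any solution of \eqref{eq:K2_NS_auxiliary}, yielding
\[
\left( 1 - \hat \delta \nu(s(t)) \right) \frac{\partial \omega_I}{\partial s}(s(t), \eps) = F(\omega_I(s(t), \eps), s(t), \eps) ,
\]
which gives the stated equation since the dependence on the particular solution drops out. Assertion 2 follows from the Lipschitz bounds in \cite[Thm.~7.5]{Nipp2013}: thanks to the simplifications $l_{12} = l_{13} = 0$ highlighted in Remark \ref{rem:NS_bounds}, the general expressions for these constants collapse to $l_{21}/|l_{22}|$ and $l_{23}/|l_{22}|$, as asserted. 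Assertion 3 is the uniform attractivity estimate, which is a standard consequence of the logarithmic-norm bound in \eqref{eq:bounds_logarithmic_norm} via a Gronwall-type argument on the difference $x(t) - \omega_I(s(t), \eps)$ along the flow.

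No single step is a significant obstacle: the substantive analytic work has already been carried out in Sections \ref{sec:conditions_1}--\ref{sec:conditions_2} by designing the cutoff $\nu$ so that the auxiliary system is outflowing with respect to $\widetilde{\mathcal I}$ while coinciding with \eqref{eq:K2_NS} on $\mathcal K$, and by verifying the contractivity hypotheses. The main care needed is in the notational translation between \cite{Nipp2013} and the present paper, and in confirming that the simplifications from $l_{12} = l_{13} = 0$ propagate through the estimates to produce the clean Lipschitz constants in Assertion 2.
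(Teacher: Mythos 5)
Your proposal is correct and takes essentially the same approach as the paper: both invoke \cite[Thm.~7.5]{Nipp2013} after confirming that its hypotheses (HD, HDA, CD, CDA, CDA($k$)) have been verified in Lemma~\ref{lem:NS_HD} and the surrounding discussion in Section~\ref{sec:conditions_2}. The only difference is that you spell out the translation of the theorem's conclusions into Assertions 1--3 (the invariance equation by differentiating along a solution, the collapse of the Lipschitz bounds via $l_{12}=l_{13}=0$, and the Gronwall-type contraction estimate), whereas the paper's proof compresses all of this into a single sentence.
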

	
	\begin{proof}
	This follows directly from \cite[Thm.~7.5]{Nipp2013} after substituting the specific quantities for system \eqref{eq:K2_NS_auxiliary} derived in the lead up to the statement of the result.
	\end{proof}
	
	
	Proposition \ref{prop:K2_auxiliary_manifold} confirms the existence of a $C^k$-smooth, negatively invariant and uniformly attracting 1-dimensional manifold $\mathcal W$ in the auxiliary system \eqref{eq:K2_NS_auxiliary}. The idea now is to construct the function $\omega : (- \rho, \rho) \times (0, \eps_0) \to \R^n$ and the corresponding manifold $\mathcal W$ in Theorem \ref{thm:normal_hyperbolicity} using the center manifolds $\mathcal M_1^-$ and $\mathcal M_3^+$, together with the manifold $\mathcal W_I$ described in Proposition \ref{prop:K2_auxiliary_manifold}. More precisely, we consider the function $\omega : (- \rho, \rho) \times (0, \eps_0) \to \R^n$ defined by
	\begin{equation}
	\label{eq:omega_construction}
	\omega(s, \eps) := 
	\begin{cases}
		h_1^-(-s, -s / \eps) , & 0 < \eps / \rho \leq - s \leq \rho , \\
		\omega_I(s / \eps, \eps) , & s \in ( - \eps M, \eps M) , \\
		h_3^+(s, s / \eps) , & 0 < \eps / \rho \leq s \leq \rho , \\
	\end{cases}
	\end{equation}
	where $h_1^-$ ($h_3^+$) is the function which defines the center manifold $\mathcal M_1^-$ ($\mathcal M_3^+$) described in Lemma \ref{lem:K1_cm} (Lemma \ref{lem:K3_cm}), and $\omega_I$ is the function which defines the locally invariant manifold $\mathcal W_I$ described in Proposition \ref{prop:K2_auxiliary_manifold}. We set $\delta = \rho$ for simplicity, in which case Lemmas \ref{lem:K1_cm} and \ref{lem:K3_cm} apply as long as $\rho > 0$ is fixed sufficiently small. Fixing $M = \rho^{-1}$ ensures that $\omega$ is well-defined; the function $\omega_I$ can be -- and in the following will be -- chosen so that the Taylor expansions of $h_1^-(-s, -s/\eps)$ and $\omega_I(s/\eps, \eps)$ coincide at $s = - \eps \rho^{-1}$, and the Taylor expansions $h_3^+(s, s/\eps)$ and $\omega_I(s/\eps, \eps)$ coincide at $s = \eps \rho^{-1}$. 
	This is possible because all three manifolds are uniformly exponentially attracting and defined on overlapping domains if $M > \rho^{-1}$, which implies that their Taylor expansions coincide.
	
	We now proceed to verify the assertions and properties in Theorem \ref{thm:normal_hyperbolicity}. It suffices to prove Assertions 1-3 because, as noted above, Assertion 4 can be proven directly using Fenichel theory and series expansions.
	
	Assertion 1 follows by construction. In particular, the existence of the locally invariant manifold $\mathcal W$ in \eqref{eq:mathcal_W} follows from the definition of $\omega$ in \eqref{eq:omega_construction}, together with the local invariance of the manifolds $\mathcal S_\eps^\pm$ and $\mathcal W_I$ due to Lemmas \ref{lem:K1_cm}, \ref{lem:K3_cm} and Proposition \ref{prop:K2_auxiliary_manifold} respectively. The fact that $\omega_I$ satisfies the invariance equation \eqref{eq:omega_I_invariance} follows from Proposition \ref{prop:K2_auxiliary_manifold} Assertion 1 and the fact that $\nu(s) \equiv 0$ for all $\eps s \in \mathcal I$.
	
	Assertion 2 has already been proven in Proposition \ref{prop:K2_auxiliary_manifold}, and Assertion 3 follows from the exponential contractivity of the center manifolds $\mathcal M^-_1$, $\mathcal M^+_3$ and Proposition \ref{prop:K2_auxiliary_manifold} Assertion 3. This concludes the proof of Theorem \ref{thm:normal_hyperbolicity}.

	\section{Application to scalar asymptotically slow-fast equations}
	\label{sec:example}
	
	In the following we demonstrate our results in the context of scalar asymptotically slow-fast equations of the form
	\begin{equation}
	\label{eq:scalar_asf_eqn}
	x' = \gamma(\mu t) F_+(x) + \left( 1 - \gamma(\mu t) \right) F_-(x) + g(\gamma(\mu t), \eps t, \sigma, \eps) ,
	\end{equation}
	where the functions $F_\pm$ and $g$ are smooth and will be specified in each separate case below. When $g \equiv 0$, equation \eqref{eq:scalar_asf_eqn} is asymptotically autonomous with past (future) limiting system $x' = F_-(x)$ ($x' = F_+(x)$). The function $g$ can be interpreted as a parameter- and time-dependent forcing term. For simplicity, we shall will also assume that $\gamma$ is sigmoidal and satisfies
	\begin{equation}
	\label{eq:gamma_properties}
	\gamma'(z) > 0 , \qquad 
	\gamma(-z) = 1 - \gamma(z) , \qquad 
	\gamma'(z) = \gamma'(-z) ,
	\end{equation}
	for all $z \in \R$. For numerical purposes we shall use
	\begin{equation}
	\label{eq:gamma_numerics}
	\gamma(z) = \frac{1}{2} \left[ 1 + \frac{z}{\sqrt{1 + z^2}} \right] ,
	\end{equation}
	\rev{however, as we shall see, most of our results will be qualitatively independent of this choice} (although they do rely on the properties in \eqref{eq:gamma_properties}). 
	The corresponding autonomous system in $\R^2$, obtained by setting $s = \eps t$, is
	\begin{equation}
	\label{eq:scalar_asf_system}
	\begin{split}
		x' &= \gamma(\mu s / \eps) F_+(x) + \left( 1 - \gamma(\mu s / \eps) \right) F_-(x) + g(\gamma(\mu s / \eps), s, \sigma, \eps) , \\
		s' &= \eps ,
	\end{split}
	\end{equation}
	and the PWS system arising in the singular limit as $\eps \to 0$ is given by
	\begin{equation}
	\label{eq:scalar_asf_pws}
	\begin{pmatrix}
		x' \\
		s' 
	\end{pmatrix}
	=
	\begin{cases}
		\begin{pmatrix}
			F_-(x) + g(0, s, \sigma, 0) \\
			0 
		\end{pmatrix}
		& \quad \text{for} \quad s < 0 , \\
		\begin{pmatrix}
			F_+(x) + g(1, s, \sigma, 0) \\
			0
		\end{pmatrix}
		& \quad \text{for} \quad s > 0 ,
	\end{cases}
	\end{equation}
	which is (generically) discontinuous along the switching manifold $\Sigma = \{s = 0\}$. The 1-dimensional critical manifolds \rev{$\mathcal C^\pm$}, when they exist, are unions of curves defined by
	\begin{equation*}
	\begin{split}
		\rev{\mathcal C^-} &= \left\{ (x,s) \in \R \times \R_- : F_-(x) + g(0, s, \sigma, 0) = 0 \right\} , \\
		\rev{\mathcal C^+} &= \left\{ (x,s) \in \R \times \R_+ : F_+(x) + g(0, s, \sigma, 0) = 0 \right\} .
	\end{split}
	\end{equation*}
	We shall consider two different examples: One featuring bifurcation and rate induced transitions, and another featuring a persistent normally hyperbolic manifold.

	\subsection{Rate and bifurcation induced transitions}
	\label{sec:example_tipping}
	
	We consider system \eqref{eq:scalar_asf_eqn} with
	\begin{equation}
	\label{eq:Fg_tipping}
	F_-(x) = -x , \quad 
	F_+(x) = x(1 - x^2) , \quad 
	g(\gamma(\mu t), \eps t, \sigma, \eps) = A \sin(\eps t) + \eps \left(\sigma - \gamma(\mu t)^2 \right) ,
	\end{equation}
	where $A > 0$ is a parameter which determines the amplitude of the (slow) time-periodic forcing. In the following, we rewrite the forcing term as
	\[
	g(\gamma(\mu s / \eps), s, \sigma, \eps) = A \sin(s) + \eps \left(\sigma - \gamma(\mu s / \eps)^2 \right) ,
	\]
	and work with system \eqref{eq:scalar_asf_system}. The critical manifold $\rev{\mathcal C^-}$ contains a single branch
	\begin{equation}
	\label{eq:mathcal_S_m}
	\mathcal S^- = \left\{ (x, s) \in \R \times \R_- : x = A \sin(s) \right\} ,
	\end{equation}
	which (one can check) is normally hyperbolic and attracting for all $s < 0$, and the hyperbolicity condition at $p_- : (0,0)$ in Assumption \ref{ass:normal_hyperbolicity} (recall that $p_\pm = \mathcal S^\pm \cap \Sigma$) is satisfied with $D_x f_-(0,0,\sigma) = D_x F(0) = -1$. The critical manifold $\rev{\mathcal C^+}$ has either 1, 2 or 3 branches, depending on the number of real solutions to the equation $x (1 - x^2) + A \sin (s) = 0$. For our purposes, it suffices to note that if $s \in [0,\rho]$ and $\rho > 0$ is fixed sufficiently small, then $\rev{\mathcal C^+}|_{s \in [0,\rho]}$ is comprised of an upper, middle and lower branches of the form
	\begin{equation}
	\label{eq:mathcal_S_p}
	\begin{split}
		\rev{\mathcal C^+_u} &= \left\{ (h^+_u(s), s) : s \in [0,\rho] \right\} , \\
		\mathcal S^+ &= \left\{ (h^+(s), s) : s \in [0,\rho] \right\} , \\
		\rev{\mathcal C^+_l} &= \left\{ (h^+_l(s), s) : s \in [0,\rho] \right\} , 
	\end{split}
	\end{equation}
	respectively, where the functions $h^+_u$, $h^+$ and $h^+_l$ satisfy
	\[
	h^+_u(s) = 1 + O(s) , \qquad 
	h^+(s) = O(s) , \qquad
	h^+_l(s) = -1 + O(s) ,
	\]
	as $s \to 0^+$; see Figure \ref{fig:b_tipping}. Assuming again that $\rho > 0$ is sufficiently small, direct calculations show that $\rev{\mathcal C^+_u}$ and $\rev{\mathcal C^+_l}$ are normally hyperbolic and attracting for all $s \in (0,\rho]$, while $\mathcal S^+$ is normally hyperbolic and repelling for all $s \in (0,\rho]$. We also have that $p_+ : (0,0)$, and $D_x f_+(0,0,\sigma) = D_x F_+(0) = 1$. Thus, Assumption \ref{ass:normal_hyperbolicity} is satisfied and, by Fenichel theory, compact submanifolds of $\rev{\mathcal C^-} = \mathcal S^-$ and $\rev{\mathcal C^+ = \mathcal C^+_l \cup \mathcal S^+ \cup \mathcal C^+_u}$ which are bounded away from $\Sigma$ perturb to $O(\eps)$-close slow manifolds. As in earlier sections, we denote the corresponding slow manifolds by appending the subscript $\eps$.
	
	\begin{figure}[t!]
	\centering
	\includegraphics[scale=0.85]{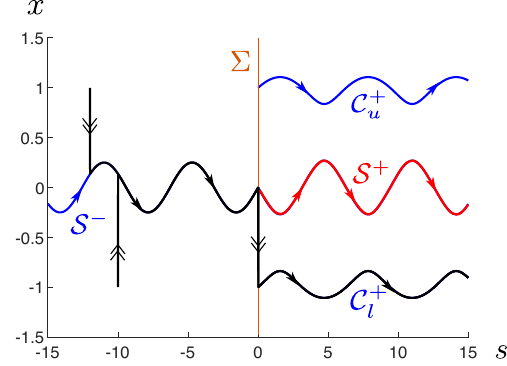}
	\ \ 
	\includegraphics[scale=0.85]{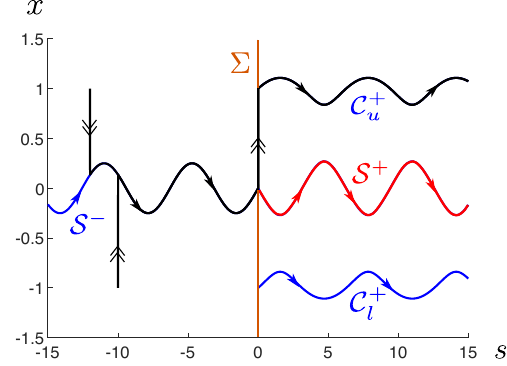}
	\caption{Bifurcation induced transition in system \eqref{eq:scalar_asf_system} with \eqref{eq:Fg_tipping}, $A = 1/4$ and $\mu = 1$. In black, we show solutions with different initial conditions, computed numerically using a stiff ODE solver (ode15s) in Matlab with $\eps = 0.001$. The left panel shows solutions for $\sigma = 0.36$, and the right panel shows solutions for $\sigma = 0.37$. We have also plotted the critical manifolds $\rev{\mathcal C^-} = \mathcal S^-$, $\rev{\mathcal C^+ = \mathcal C^+_l \cup \mathcal S^+ \cup \mathcal C^+_u}$ and the switching manifold $\Sigma$ identified in the singular limit analysis of the limiting slow-fast PWS system \eqref{eq:scalar_asf_pws}. The color conventions are consistent with earlier figures and, as before, single (double) arrows represent slow (fast) evolution in time. Solutions in the left panel are exponentially close to $\rev{\mathcal C^+_{l,\eps}}$ ($\rev{\mathcal C^+_{u,\eps}}$) when they leave a neighbourhood of $\Sigma$, showing that the forward time attractor has changed due to variation in $\sigma$ over a critical value of $\sigma_c(1) \approx 0.3636$ predicted by Theorem \ref{thm:tipping}.}
	\label{fig:b_tipping}
	\end{figure}
	
	We now use Assertions 1 and 3 of Theorem \ref{thm:tipping} in order to identify rate and bifurcation induced transitions in which the forward extension of $\mathcal S^-_\eps$ `jumps' from \rev{$\mathcal C_{l,\eps}^+$} to \rev{$\mathcal C_{u,\eps}^+$} as $\mu$ and $\sigma$ are varied over an $\eps$-dependent neighbourhoods about critical values $\mu_c$ and $\sigma_c$ respectively.
	%
	%
	Note that Assumption \ref{ass:heteroclinic}(I) is satisfied, with $\rev{\phi(t)} = (0,t)$. This leads to the following exponentially decaying solution to the adjoint variational equation \eqref{eq:adjoint_variational_eqn}:
	\begin{equation}
		\label{eq:psi_1}
		\psi_1(t, \mu) = \exp \left[ \int_1^t \left(1 - 2 \gamma(\mu \xi) \right) d \xi  \right] .
	\end{equation}
	One can verify with direct calculations using the properties in \eqref{eq:gamma_properties} that $\psi_1(t)$ is even, bounded, and strictly positive for all $t \in \R$. In order to identify critical rate and bifurcation values associated with a transition, we use \eqref{eq:psi_1} to look for roots of the function $\mathcal G_\eps(\mu, \sigma)$ defined in \eqref{eq:mathcal_G_eps}. Direct evaluation and a little algebraic manipulation leads to
	\begin{equation}
		\label{eq:mathcal_G_eps_integrals}
		\begin{split}
			\mathcal G_\eps(\mu, \sigma) &= A \int_{-\infty}^{\infty} \psi_1(t, \mu) t dt + \sigma \int_{-\infty}^{\infty} \psi_1(t, \mu) dt - \int_{-\infty}^{\infty} \psi_1(t, \mu) \gamma(\mu t)^2 dt \\
			&= \sigma \int_{-\infty}^{\infty} \psi_1(t, \mu) dt - \int_{-\infty}^{\infty} \psi_1(t, \mu) \gamma(\mu t)^2 dt ,
		\end{split}
	\end{equation}
	where we used the fact that the map $t \mapsto \psi_1(t, \mu) t$ is odd for each fixed $\mu$ (since $t \mapsto \psi_1(t, \mu)$ is even) in order to eliminate the left-most integral in the first equality. Since both integrals are strictly positive, equation \eqref{eq:mathcal_G_eps_integrals} must have at least one root if $\sigma$ is sufficiently small and positive. This occurs for $\sigma = \sigma_c(\mu)$, where
	\begin{equation}
		\label{eq:sigma_c_pitchfork}
		\sigma_c(\mu) := \frac{\int_{-\infty}^{\infty} \psi_1(t, \mu) \gamma(\mu t)^2 dt}{\int_{-\infty}^{\infty} \psi_1(t, \mu) dt} .
	\end{equation}
	Since
	\[
	\frac{\partial \mathcal G_\eps}{\partial \sigma} (\mu, \sigma_c(\mu)) = \int_{-\infty}^{\infty} \psi_1(t, \mu) dt > 0 
	\]
	for all fixed $\mu > 0$, Theorem \ref{thm:tipping} Assertion 3 implies the desired intersection for
	\begin{equation}
		\label{eq:tilde_sigma}
		\tilde \sigma(\mu, \eps) = \sigma_c(\mu) + O(\eps) 
	\end{equation}
	as $\eps \to 0$. We verified this result numerically for parameter values $\mu = 1$, $A = 1/4$, $\eps = 0.001$ and two different values of $\sigma$. This is shown in Figure \ref{fig:b_tipping}, which shows that a bifurcation induced transition occurs near a critical value \rev{of} $\sigma$ which lies between $0.36$ and $0.37$. This agrees with the theoretical prediction of $\tilde \sigma_c(1,0.001) \approx \sigma_c(1) \approx 0.3636$ obtained using \eqref{eq:sigma_c_pitchfork}.
	
	\
	
	\begin{figure}[t!]
		\centering
		\includegraphics[scale=0.4]{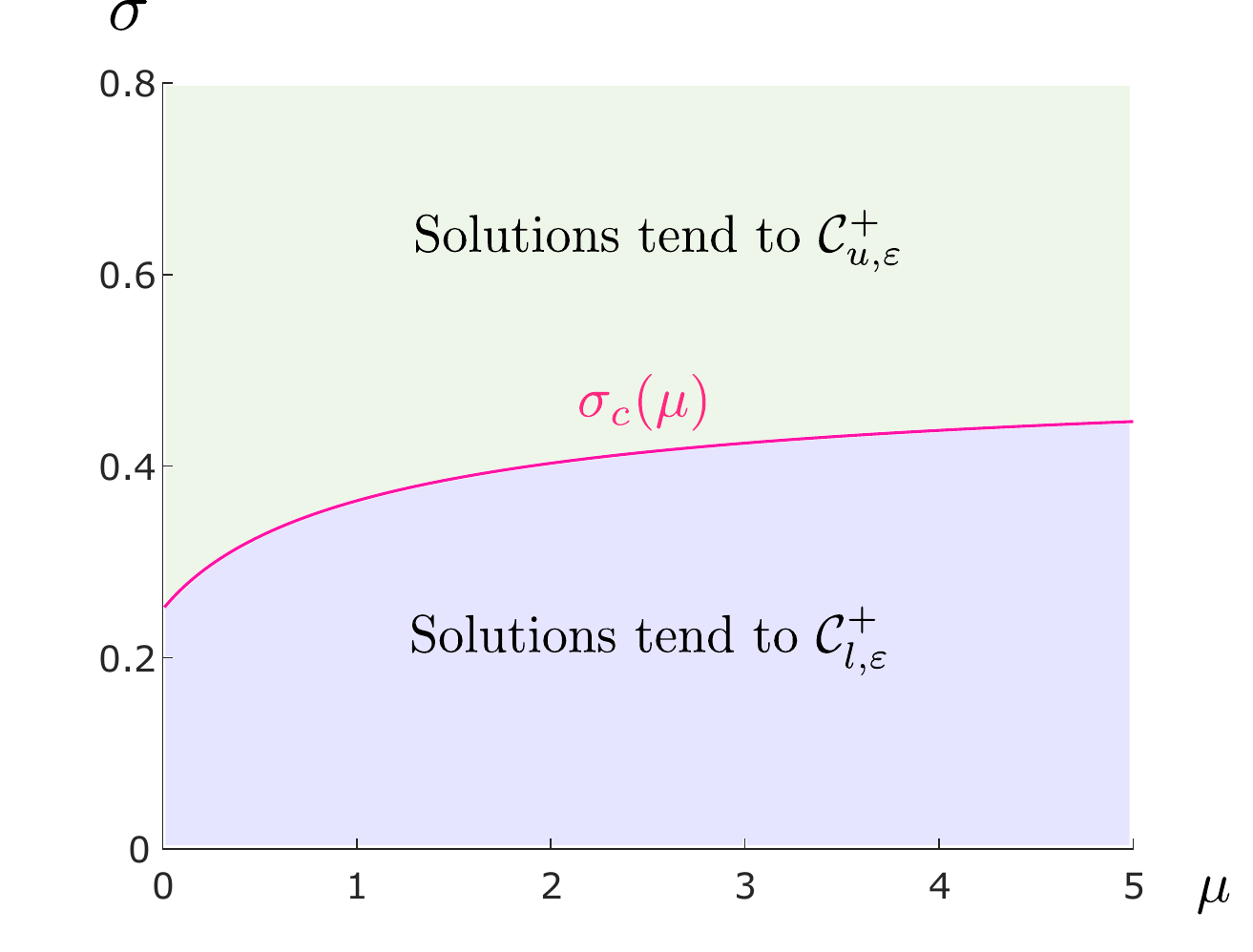}
		\caption{Critical curve defined by the function $\sigma_c(\mu)$ in \eqref{eq:sigma_c_pitchfork} the $(\mu, \sigma)$-parameter plane (magenta). For each $(\mu, \sigma)$ fixed in the blue region below (green region above) the curve, solutions of system \eqref{eq:scalar_asf_system} with \eqref{eq:Fg_tipping} are exponentially close to the slow manifold $\rev{\mathcal C^+_{l,\eps}}$ ($\rev{\mathcal C^+_{u,\eps}}$) when they leave a neighbourhood of the transitionary regime about $\Sigma$. To leading order as $\eps \to 0$, the bifurcation induced transition in Figure \ref{fig:b_tipping} corresponds to a transversal intersection of the curve defined by $\sigma = \sigma_c(\mu)$ with the vertical line defined by $\mu = 1$. A rate induced transition corresponds to a transversal intersection with a horizontal line defined by a constant $\sigma$ value.}
		\label{fig:2par_plane}
	\end{figure}	
	
	In order to identify a rate induced transition, it suffices to show that
	\[
	\frac{\partial \mathcal G_\eps}{\partial \mu} (\mu_c, \sigma) \neq 0
	\]
	for any fixed choice of $\sigma > 0$ which is small enough to ensure that equation \eqref{eq:mathcal_G_eps_integrals} has a root for some $\mu = \mu_c$. We were unable to verify this condition analytically, however one can easily verify it numerically for particular parameter values. In fact, we claim a stronger result, namely that
	\[
	\frac{\partial \mathcal G_\eps}{\partial \mu} (\mu, \sigma_c(\mu)) \neq 0
	\]
	for an entire interval of $\mu$-values (if not all $\mu > 0$), based on the numerically computed graph of $\sigma = \sigma_c(\mu)$ in the $(\mu,\sigma)$-plane shown in Figure \ref{fig:2par_plane}. Rate induced transitions occur as one `crosses' this curve by varying $\mu$ along lines of constant $\sigma$. For example, if $\sigma = 0.4$, then the diagram predicts a rate induced transition at a critical rate of $\mu_c \approx 1.88$. This can be verified by simulations which show behaviour that is qualitatively similar to the bifurcation induced transition shown in Figure \ref{fig:b_tipping}.
	
	\begin{remark}
		\rev{The preceding analysis shows that the transition depicted in Figure \ref{fig:b_tipping} occurs if $\sigma$ and/or $\mu$ are varied over a path in $(\sigma, \mu)$-space which transversally intersects the graph defined by $\sigma = \sigma_c(\mu)$ in Figure \ref{fig:2par_plane} as $\eps \to 0$. In this sense, the qualitative effect on the dynamics does not depend on whether one varies $\mu$ as opposed to $\sigma$, i.e.~it does not depend on whether the transition is `bifurcation induced' or `rate induced'.}
	\end{remark}
	
	\begin{remark}
		\rev{Our results on bifurcation-induced transitions are qualitatively independent of the choice ramp/regularisation function $\gamma$ insofar as the arguments we used to prove the existence of a bifurcation-induced transition when $\sigma = \tilde \sigma(\mu, \eps)$ hold for any ramp/regularisation function $\gamma$ satisfying \eqref{eq:gamma_properties}. We conjecture that rate induced transitions also occur for any $\gamma$ satisfying \eqref{eq:gamma_properties}, however, the results provided above are numerical and therefore depend on the particular choice of $\gamma$ in \eqref{eq:gamma_numerics}.}
	\end{remark}
	
	\begin{remark}
		The arguments presented in \rev{this} section are sufficient to prove the existence of canard-like intersections close to the parameter space curve defined in \eqref{eq:tilde_sigma}. It is also possible to prove that solutions leave a neighbourhood of $\Sigma$ exponentially close to \rev{$\mathcal C_{l,\eps}^+$ ($\mathcal C_{u,\eps}^+$)} as $\eps \to 0$, as seen in Figure \ref{fig:b_tipping}, if the point $(\mu,\sigma)$ is fixed below (above) the curve defined by $\sigma = \sigma_c(\mu)$ in Figure \ref{fig:2par_plane}. The proof, which is omitted for brevity, is similar to the proof of \cite[Thm.~4.1]{Krupa2001c}, which describes the exchange of stability under parameter variation near a pitchfork singularity in planar slow-fast systems.
	\end{remark}

	\subsection{Tracking}
	
	We now consider system \eqref{eq:scalar_asf_system} with
	\begin{equation}
		\label{eq:Fg_tracking}
		F_-(x) = -x , \qquad 
		F_+(x) = -x(1 - x^2) , \qquad 
		g(\gamma(\mu t), \eps t, \sigma, \eps) = A \sin(s) + \eps .
	\end{equation}
	This defines a system which is similar to that considered in Section \ref{sec:example_tipping}, except with a different sign in front of $F_+$ and a simpler forcing function $g$. In this case, the PWS system \eqref{eq:scalar_asf_pws} obtained in the singular limit has the same normally hyperbolic and attracting critical manifold $\mathcal S^-$, but the stability properties of the critical manifold on $s > 0$ are different due to the change in sign in front of $F_+$. Nevertheless, the structure is similar, and there are three distinct branches for $s \in [0,\rho]$ as long as $\rho > 0$ is fixed sufficiently small. We want to show that the slow manifold $\mathcal S^-_\eps$ extends through the transitionary regime and connects to the slow manifold $\mathcal S^+_\eps$ which perturbs from the middle branch of $\rev{\mathcal C^+}$, which we denote again by $\mathcal S^+$. Notice that $p_- : (0,0)$ and $p_+ : (0,0)$, similarly to the previous section, but (in contrast to the previous section) $D_x f_+(p_+) = D_x F_+(0) = - 1$, which shows that $\mathcal S^+$ is locally normally hyperbolic and attracting. Assumption \ref{ass:normal_hyperbolicity} is still satisfied.\footnote{\rev{One can also verify Assumption \ref{ass:heteroclinic} for this system, however, Theorem \ref{thm:tipping} does not apply because there is no positive eigenvalue of $D_x f_+(p_+)$.}}
	
	We obtain the following result.
	
	\begin{proposition}
		\label{prop:tracking}
		Consider system \eqref{eq:scalar_asf_system} with \eqref{eq:Fg_tracking}. There exists an $\eps_0 > 0$ and a $C^k$ function $\omega : [- \rho, \rho ] \times (0,\eps_0) \to \R$ such that the following assertions are true for all $A < 3/8 - \eps_0$:
		\begin{enumerate}
			\item The 1-dimensional manifold
			\[
			\mathcal W := \left\{ (\omega(s,\eps), s) : s \in [-\rho, \rho] \right\}
			\]
			is locally invariant under the flow. Moreover, the restricted function $\omega_I := \omega|_{s \in I}$ satisfies the invariance equation
			\[
			\frac{\partial \omega_I}{\partial s} (s, \eps) = 
			f(\omega_I(s,\eps), \gamma(\mu s), s, \eps) ,
			\]
			where $I = [-\eps M, \eps M]$ is the interval defined before the statement of Theorem \ref{thm:normal_hyperbolicity}.
			\item $\omega_I$ is Lipschitz in both $s$ and $\eps$. More precisely, we have
			\[
			| \omega_I(s, \eps) - \omega_I(\tilde s, \eps) | \leq 4 , \qquad 
			| \omega_I(s, \eps) - \omega_I(s, \tilde \eps) | \leq 4 ,
			\]
			for all $s, \tilde s \in I$ and $\eps, \tilde \eps \in (0,\eps_0)$.
			\item $\mathcal W$ is uniformly attracting. \rev{In particular,} for all solutions $(x(t), s(t))$ with $(x(0), s(0)) \in \mathcal X \times (-\rho, \rho)$, we have
			\[
			\left| x(t) - \omega(s(t), \eps) \right| \leq 
			\e^{- t / 4} \left| x(0) - \omega(s(0), \eps) \right| ,
			\]
			for all $t \geq 0$ such that $s(t) = \eps t + s(0) \in (-\rho, \rho)$.
			\item For each fixed $c \in (0,\rho)$, $\mathcal W$ is $O(\eps)$-close to $\mathcal S^-$ ($\mathcal S^+$) on $s \in [-\rho, -c]$ ($s \in [c,\rho])$. More precisely, the restricted functions $\omega_O^- := \omega|_{s \in [-\rho,-c]}$ and $\omega_O^+ := \omega|_{s \in [c, \rho]}$ have asymptotics
			\[
			\omega_O^\pm(s,\eps) = h^\pm(s) + O(\eps) 
			\]
			as $\eps \to 0$, where $h^+(s) = O(s)$ as $s \to 0^+$ and $h^-(s) = A \sin(s)$.
		\end{enumerate}
	\end{proposition}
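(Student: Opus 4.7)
The plan is to apply Theorem \ref{thm:normal_hyperbolicity} directly, by identifying a set $\mathcal X$ and constants $l_{21}, l_{22}, l_{23}$ that reproduce the Lipschitz bounds and contraction rate claimed in the statement. Assumption \ref{ass:normal_hyperbolicity} has already been verified in the paragraph preceding the proposition, so only Assumption \ref{ass:tracking} remains to be checked.

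First I would simplify the right-hand side of \eqref{eq:scalar_asf_system}. Substituting \eqref{eq:Fg_tracking} and cancelling terms yields
\[
f(x, \gamma, s, \eps) = -x + \gamma x^3 + A\sin(s) + \eps ,
\]
so $D_x f = -1 + 3\gamma x^2$ and $D_\eps f = 1$. I would take $\mathcal X = (-1/2, 1/2)$, which is bounded, open and convex; for $\rho > 0$ sufficiently small the product $\mathcal X \times [-\rho, \rho]$ contains $\mathcal S^-$ (since $|A\sin s| \leq A < 3/8 < 1/2$) and the middle branch $\mathcal S^+$ (since $h^+(s) = O(s)$ as $s \to 0^+$). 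At the boundary $x = 1/2$ the outward normal is $n = 1$ and
\[
n \cdot f \leq -\tfrac{1}{2} + \tfrac{1}{8} + A + \eps \leq -\tfrac{3}{8} + A + \eps_0 < 0
\]
whenever $A < 3/8 - \eps_0$, which is exactly the hypothesis of the proposition; the boundary $x = -1/2$ is symmetric, so Assumption \ref{ass:tracking}(i) holds. For Assumption \ref{ass:tracking}(ii), with $|x| \leq 1/2$ and $\gamma \in (0,1)$ one has $D_x f \leq -1 + 3/4 = -1/4$, so $l_{22} = -1/4 < 0$; the remaining sups in \eqref{eq:l_ij} give $l_{21} = \sup|D_x f| = 1$ and $l_{23} = \sup|D_\eps f| = 1$.

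Applying Theorem \ref{thm:normal_hyperbolicity} then yields Assertions 1--3 with the stated constants, since $l_{21}/|l_{22}| = l_{23}/|l_{22}| = 4$ and the contraction rate is $\e^{l_{22} t} = \e^{-t/4}$. For Assertion 4 I would verify that $\mathcal S^\pm$ are normally hyperbolic and attracting on $|s| \in (0, \rho]$: $\mathcal S^-$ is the graph of $h^-(s) = A \sin(s)$ with $D_x f_-(h^-(s), s) \equiv -1$, and the middle branch $\mathcal S^+$ satisfies $D_x F_+(h^+(s)) \to -1$ as $s \to 0^+$, so the spectral condition persists on a sufficiently small interval by continuity. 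Theorem \ref{thm:normal_hyperbolicity} Assertion 4 then provides the $O(\eps)$-closeness and the stated form of $h^\pm$. There is no serious obstacle; the only mild subtlety is that the threshold $A < 3/8$ is precisely what the rectangular choice $\mathcal X = (-1/2, 1/2)$ affords, and this same choice simultaneously delivers the round values $|l_{22}| = 1/4$ and Lipschitz constant $4$.
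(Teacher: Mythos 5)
Your proof is correct and follows essentially the same route as the paper's: choose $\mathcal X = (-1/2,1/2)$, verify the inflowing condition at $x=\pm 1/2$ using $A < 3/8 - \eps_0$, read off $D_x f = -1 + 3\gamma x^2 \in [-1,-1/4]$ to get $l_{22}=-1/4$ with $l_{21}=l_{23}=1$, and invoke Theorem \ref{thm:normal_hyperbolicity}. The only small difference is that you explicitly check the normal hyperbolicity/attractivity of $\mathcal S^\pm$ on $|s|\in(0,\rho]$ required by Assertion 4 of Theorem \ref{thm:normal_hyperbolicity}, which the paper leaves implicit; this is a modest but welcome bit of extra care rather than a different approach.
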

	
	\begin{proof}
		In order to use Theorem \ref{thm:normal_hyperbolicity}, we need to choose a neighbourhood $\mathcal X$. For simplicity we choose $\mathcal X = (-1/2, 1/2)$, however we note that different choices may lead to more optimal results; see again Remark \ref{rem:optimality}. With this choice, one can verify Assumption \ref{ass:tracking}(i) directly using the inequalities
		\[
		x'|_{x = \frac{1}{2}} < - \frac{3}{8} + A + \eps , \qquad 
		x'|_{x = -\frac{1}{2}} > \frac{3}{8} - A + \eps ,
		\]
		which imply that system \eqref{eq:scalar_asf_system} is (strictly) inflowing with respect to $\partial \mathcal X$ as long as $A < 3/8 - \eps_0$. In order to verify Assumption \ref{ass:tracking}(ii), we use the fact that
		\[
		D_x f(x, \gamma(\mu s / \eps), s, \eps) = -1 + 3 x^2 \gamma(\mu s / \eps) \in [-1, -1/4] 
		\]
		for all $x \in \mathcal X$ and $s \in \R$ in order to show that the logarithmic norm inequality in \eqref{eq:log_norm} is satisfied with $l_{22} = - 1/4$.
		
		The preceding arguments show that Assumption \ref{ass:tracking} is satisfied. This allows for the application of Theorem \ref{thm:normal_hyperbolicity}. Using the fact that $l_{21} = l_{23} = 1$, which follows from the definitions in \eqref{eq:l_ij} and the form of the equations in \eqref{eq:scalar_asf_system} and \eqref{eq:Fg_tracking}, Assertions 1-4 follow directly from Assertions 1-4 in Theorem \ref{thm:normal_hyperbolicity} respectively.
	\end{proof}
	
	\begin{remark}
		\rev{The proof of Proposition \ref{prop:tracking} does not depend on the particular form of $\gamma$ in \eqref{eq:gamma_numerics}, i.e.~the statements hold for any ramp/regularisation function $\gamma$ satisfying \eqref{eq:gamma_properties}.}
	\end{remark}

	\section{Summary and outlook}
	\label{sec:summary_and_outlook}
	
	The increased attention to the study of bifurcation and rate induced tipping in non-autonomous dynamical systems may be attributed to prevalence of `tipping phenomena' in important complex systems including the Earth's climate system \cite{Ashwin2012,Lenton2011,Lenton2008,Scheffer2020,Wieczorek2011}, and detailed mathematical approaches to the study of bifurcation and rate induced transitions have been undertaken in the context of asymptotically autonomous systems of the general form \eqref{eq:asymptotically_autonomous_system}. A major analytical advantage of restricting to asymptotically autonomous systems is the availability of classical dynamical systems methods, \rev{following a suitable compactification procedure which may be applied to an extended autonomous system} \cite{Chen2023,Wieczorek2023,Wieczorek2021}. Unfortunately, despite significant analytical advantages, these approaches cannot account for non-trivial dependence on time as $t \to \pm \infty$, which may be restrictive in applications. In contrast, the class of `truly non-autonomous' systems of the form \eqref{eq:nonautonomous_system} is much larger, but systems within this class can be extremely challenging to analyse, with existing works showing that detailed analytical treatment can be incredibly intricate even for scalar equations with a particular structure \cite{Duenas2023c,Duenas2023b,Duenas2023,Longo2022,Longo2023,Longo2021}.
	
	In this work, we developed and applied a mathematical framework for the study of asymptotically slow-fast systems in the general form \eqref{eq:asymptotically_sf_systems}. These systems form a class which is strictly larger than the class of asymptotically autonomous systems \eqref{eq:asymptotically_autonomous_system}, but strictly smaller than the class of non-autonomous systems \eqref{eq:nonautonomous_system}. In Section \ref{sec:setup_and_assumptions}, we showed that the past and future subsystems of these systems are slow-fast, with 1-dimensional critical manifolds \rev{$\mathcal C^\pm$}. In addition to the slow-fast structure in the `outer regimes' where $|s| > 0$, the limit as $\eps \to 0$ leads to the loss of smoothness as the entire inner/transitionary regime for which $s = O(\eps)$ is condensed into a codimension-1 switching manifold $\Sigma = \{ s = 0 \}$. In Section \ref{sec:geometric_blow-up}, we showed that the loss of smoothness along $\Sigma$ can be resolved via geometric blow-up. A primary objective of this work has been to show that the blow-up method provides a natural geometric approach to the study of asymptotically slow-fast systems. In particular, it provides a rigorous and geometric way of connecting the limiting dynamics on the `compactified' inner regime, which is governed by an asymptotically autonomous system that is constrained to the surface of the blow-up cylinder, to the limiting slow-fast systems which govern the dynamics in the outer regimes with $s < 0$ and $s > 0$. It is also worthy to note that the blow-up procedure itself does not rely on any of the Assumptions \ref{ass:normal_hyperbolicity}-\ref{ass:tracking}.
	
	In order to demonstrate the utility of the formalism, we used it to prove two main results, both of which were stated in Section \ref{sec:main_results}. The first of these, Theorem \ref{thm:tipping}, provides sufficient conditions for canard-type intersection of forward and backward extensions of 1-dimensional slow manifolds $\mathcal S^-_\eps$ and $\mathcal S^+_\eps$, for locally unique parameter values (the rate parameter $\mu$ in the case of Assertions 1-2, and the bifurcation parameter $\sigma$ in the case of Assertions 3-4). For scalar asymptotically slow-fast equations these intersections form separatrices, and the fact that they break regularly under parameter variation means that we have identified a mechanism for rate and bifurcation induced transitions. Importantly, Theorem \ref{thm:tipping} also provides a means for \textit{calculating} the critical values, at least to leading order in $\eps$. In the case of Assertions 1 and 3, a critical value $\mu_c$ or $\sigma_c$ corresponds to a simple zero of the Melnikov function \eqref{eq:mathcal_G_eps}. In the case of Assertion 2 or 4 a critical value $\mu_c$ or $\sigma_c$ corresponds to the existence of a particular solution of the inner limiting problem \eqref{eq:limit_problem} which satisfies Assumption \ref{ass:heteroclinic}(II) or Assumption \ref{ass:heteroclinic}(III) respectively. Our second main result is Theorem \ref{thm:normal_hyperbolicity}, which provides sufficient conditions for the persistence of an attracting normally hyperbolic manifold through the transitionary regime. In essence, this result provides conditions under which the extension of an attracting slow manifold in the left-outer regime can be `connected' to an attracting slow manifold in the right-outer regime, while maintaining uniform attractivity in the transitionary regime. The blow-up analysis in Section \ref{sec:geometric_blow-up} played a crucial role in the proof of both Theorems \ref{thm:tipping} and \ref{thm:normal_hyperbolicity}, which appeared in Sections \ref{sec:proof_of_thm_tipping} and \ref{sec:proof_of_thm_normal_hyperbolicity} respectively. In addition to the blow-up analysis, we relied quite heavily on the variant of Melnikov theory developed for the study of heteroclinic-like intersections of invariant manifolds on non-compact domains in \cite{Wechselberger2002} to prove Theorem \ref{thm:tipping}, and general invariant manifold theory, in particular \rev{the} formulation in \cite{Nipp2013}, to prove Theorem \ref{thm:normal_hyperbolicity}. As with the blow-up approach, these are well-developed methods which can be applied in rather general contexts.
	
	Finally, we applied our results to examples of scalar asymptotically slow-fast systems in Section \ref{sec:example}. We used Theorem \ref{thm:tipping} to identify a bifurcation and rate induced transitions, and Theorem \ref{thm:normal_hyperbolicity} in order to verify and describe the persistence of an attracting and normally hyperbolic locally invariant manifold through the inner transitionary regime. 
	
	\
	
	The primary aim of this work has been to lay the foundations for a rigorous and geometric approach to the study of asymptotically slow-fast systems. Based on this foundation, we anticipate a lot of open problems and possibilities for future research. At this point, for example, we have not yet treated any applications (with the exception of the simple examples in Section \ref{sec:example}). We have also restricted our analysis to spatially homogeneous and deterministic systems, which rules out the possibility of spatially and noise induced transitions. The question of whether the formalism developed herein can be extended to the study of asymptotically slow-fast SDE's and asymptotically slow-fast PDE's is entirely open, albeit non-trivial, due in part to the fact that the geometric blow-up method needs further development in these areas (see however \cite{Arcidiacono2020} and \cite{Engel2022,Engel2020,Jelbart2022,Jelbart2023,Zacharis2023} for recent progress in the context of asymptotically autonomous systems and PDEs respectively). At present, these and other open problems remain for future work.

	\section*{Acknowledgments}
	
	This work was \rev{partially} funded by the SFB/TRR 109 Discretization and Geometry in Dynamics, i.e.~Deutsche Forschungsgemeinschaft (DFG—German Research Foundation) - Project-ID 195170736 - TRR109\rev{, and partially funded via a European Union Marie Skłodowska-Curie Postoctoral Fellowship - Grant Agreement ID 101103827.} I would also like to thank Iacopo Longo for helpful and interesting discussions on critical transitions in non-autonomous systems.

	\bibliographystyle{siam}
	\bibliography{NADs}

\end{document}